\newcommand{\mc}{\mathscr}
\newcommand{\f}{\mathbb}
\newcommand{\C}{\mathbb{C}}
\newcommand{\ol}{\overline}
\newcommand{\cu}{\subseteq}
\newcommand{\wt}{\widetilde}
\newcommand{\HC}{\mathcal H(S^1)}
\newcommand{\PC}{\mathcal P(S^1)}
\newcommand{\HCM}[1]{\HC^{#1\times #1}}
\DeclareMathOperator{\tr}{Tr}
\newtheorem{theorem}{Theorem}[section]
\newtheorem{definition}[theorem]{Definition}
\theoremstyle{plain}
\newtheorem{example}[theorem]{Example}
\newtheorem{question}[theorem]{Question}
\newtheorem{lemma}[theorem]{Lemma}
\newtheorem{remark}[theorem]{Remark}
\newtheorem{proposition}[theorem]{Proposition}
\newtheorem{corollary}[theorem]{Corollary}
\title{On the Rellich eigendecomposition of para-Hermitian matrices and the sign characteristics of $*$-palindromic matrix polynomials}
\author{Giovanni Barbarino\footnote{Corresponding author. Department of Mathematics and Systems Analysis, Aalto University, PO Box 11100, 00076 Aalto,
Finland. Supported by the Alfred Kordelinin s\"{a}\"{a}ti\"{o} Grant No. 210122.} \, and Vanni Noferini\footnote{Department of Mathematics and Systems Analysis, Aalto University, PO Box 11100, 00076 Aalto,
Finland. Supported by an Academy of Finland grant (Suomen Akatemian p\"{a}\"{a}tos 331240).}}
\date{}
\begin{document}

\maketitle

\begin{abstract}
We study the eigendecompositions of para-Hermitian matrices $H(z)$, that is, matrix-valued functions that are analytic and Hermitian on the unit circle $S^1 \subset \C$. In particular, we fill existing gaps in the literature and prove the existence of a decomposition $H(z)=U(z)D(z)U(z)^P$ where, for all $z \in S^1$, $U(z)$ is unitary, $U(z)^P=U(z)^*$ is its conjugate transpose, and $D(z)$ is real diagonal; moreover, $U(z)$ and $D(z)$ are analytic functions of $w=z^{1/N}$ for some positive integer $N$, and $U(z)^P$ is the so-called para-Hermitian conjugate of $U(z)$. This generalizes the celebrated theorem of Rellich for matrix-valued functions that are analytic and Hermitian on the real line. We also show that there also exists a decomposition $H(z)=V(z)C(z)V(z)^P$ where $C(z)$ is pseudo-circulant,  $V(z)$ is unitary and both are analytic in $z$. We argue that, in fact, a version of Rellich's theorem can be stated for matrix-valued function that are analytic and Hermitian on any line or any circle on the complex plane. Moreover, we extend these results to para-Hermitian matrices whose entries are Puiseux series (that is, on the unit circle they are analytic in $w$ but possibly not in $z$). Finally, we discuss the implications of our results on the singular value decomposition of a matrix whose entries are $S^1$-analytic functions of $w$, and on the sign characteristics associated with unimodular eigenvalues of $*$-palindromic matrix polynomials.
\end{abstract}

\textbf{Keywords:} para-Hermitian, analytic eigendecomposition, palindromic matrix polynomial, para-unitary, sign characteristic, Rellich's theorem

\textbf{MSC:} 15A23, 15A18, 15A54, 15B57

\section{Introduction}

In the 1930s, F. Rellich proved a famous theorem \cite{rellichorig} on the existence of unitary real-analytic eigendecompositions of real-analytic matrix-valued functions that are Hermitian on the real line. Rellich's result, that has since become classical \cite{glr,kato,rellich}, is stated below in a slightly more general version that only assumes real-analyticity on an interval (see e.g. \cite{adkins,grater,mntx,wimmer}).

\begin{theorem}[Rellich's Theorem]\label{thm:rellich}
Let each entry of the matrix $A(x)$ be a function analytic in the open interval $I \subseteq \mathbb{R}$, and suppose that that $A(x)=A(x)^*$ is a Hermitian matrix for every $x \in I$. Then there exists an analytic eigenvalue decomposition of the form 
\[
A(x) = U(x) D(x) U(x)^*
\]
where $U(x)$ is unitary for every $x \in I$ and $D(x)$ is real and diagonal for every $x\in I$; moreover, all the entries of $D(x)$ and $U(x)$ are analytic in $I$.
\end{theorem}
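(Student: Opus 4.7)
The plan is to establish the decomposition locally on a neighborhood of every $x_0\in I$ and then to glue the local pieces together via analytic continuation along the connected interval $I$. Within the local construction I would proceed in two stages: first producing $n$ real analytic eigenvalue branches, and then producing an analytic unitary basis of eigenvectors by means of the holomorphic functional calculus.

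For the eigenvalues, fix $x_0\in I$ and consider the characteristic polynomial $p(x,\lambda)=\det(\lambda I-A(x))$, whose coefficients are analytic in $x$. Classical Puiseux theory provides, on a suitable punctured disk around $x_0$, a finite family of Puiseux series in $(x-x_0)^{1/N}$ parametrizing the $n$ roots, grouped into cycles of some period $d$ under the monodromy $w\mapsto e^{2\pi i/d}w$. The Hermitian hypothesis forces every root to be real for $x$ real near $x_0$. Writing $\varphi(t)=\sum_k a_k t^k$ for a representative branch of a cycle and requiring reality of the conjugate branches $\varphi(e^{2\pi ij/d}t)$ both for $x>x_0$ (real $t$) and for $x<x_0$ (rotated $t$) forces, after a short calculation, $a_k=0$ whenever $d\nmid k$. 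Hence each branch depends analytically on $t^d=x-x_0$, every cycle is trivial, and one obtains $n$ real analytic eigenvalues $\lambda_1(x),\dots,\lambda_n(x)$ on a neighborhood of $x_0$.

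Given these analytic eigenvalues, I would group them into maximal families of branches that coincide identically near $x_0$; call the distinct such families $\mu_1(x),\dots,\mu_r(x)$ with constant algebraic multiplicities $m_1,\dots,m_r$. For each $j$, select a small positively oriented Jordan contour $\Gamma_j$ separating $\mu_j(x_0)$ from the remaining $\mu_k(x_0)$, and define
\[
P_j(x)=\frac{1}{2\pi i}\oint_{\Gamma_j}\bigl(zI-A(x)\bigr)^{-1}\,dz.
\]
Then $P_j(x)$ is analytic in $x$ on a neighborhood of $x_0$, Hermitian by the spectral theorem, and of constant rank $m_j$. An analytic orthonormal basis of the range of $P_j(x)$ can be extracted by applying Gram--Schmidt to $P_j(x)$ applied to a fixed orthonormal basis of the range of $P_j(x_0)$; concatenating over $j$ gives an analytic unitary $U_{x_0}(x)$ and a real analytic diagonal $D_{x_0}(x)$ with $A(x)=U_{x_0}(x)D_{x_0}(x)U_{x_0}(x)^*$ on a neighborhood of $x_0$.

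Finally, I would globalize by analytic continuation. Fix a base point $x_\star\in I$ and extend the local decomposition through chains of overlapping neighborhoods; on each overlap the analytic eigenvalue branches and the Hermitian spectral projectors of the two local decompositions must agree up to a permutation, leaving only a unitary gauge freedom inside each joint eigenspace that can be fixed analytically. Connectedness of $I$ then yields a global analytic eigendecomposition. The main obstacle will be handling points where several distinct analytic branches happen to coincide: there the contours $\Gamma_j$ around individual branches are forbidden, only a joint projector onto a cluster of coincident branches is analytic, and the final splitting into analytic eigenvectors within such a cluster must exploit the Hermitian structure directly. Precisely here the Hermitian hypothesis is indispensable, since it guarantees equality of geometric and algebraic multiplicities and thereby rules out the Jordan-block obstructions that would prevent analytic diagonalization in the generic non-normal analytic setting.
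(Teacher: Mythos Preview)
The paper does not give its own proof of Theorem~\ref{thm:rellich}: Rellich's theorem is stated as classical background with references, and is then \emph{used} (not reproved) in Section~\ref{sec:evd}. So there is no proof in the paper to compare against directly. That said, the paper does prove the closely analogous Theorem~\ref{theo:analytic_eigvl_imply_analytic_eigvc} (para-Hermitian, eigenvalues assumed analytic), and its method is relevant to the gap in your proposal.

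Your outline has a genuine gap, and you have in fact named it yourself without closing it: the treatment of crossing points, i.e., $x_0$ at which two \emph{distinct} analytic branches satisfy $\mu_j(x_0)=\mu_k(x_0)$. Your Riesz-projector step explicitly asks for a contour $\Gamma_j$ that separates $\mu_j(x_0)$ from the other $\mu_k(x_0)$; at a crossing no such contour exists, and you only obtain the projector onto the whole cluster. Saying that the splitting inside a cluster ``must exploit the Hermitian structure directly'' is not an argument; it is precisely the content of Rellich's theorem. (For non-Hermitian analytic families one can have analytic eigenvalues and no analytic eigenvector basis, so whatever you do here must carry the full weight of the Hermitian hypothesis.) The globalization step is also underspecified at overlaps containing a crossing: there the permutation ambiguity between local orderings and the unitary gauge inside multiplicity-$>1$ eigenspaces interact, and ``can be fixed analytically'' needs justification.

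By contrast, the paper's proof of Theorem~\ref{theo:analytic_eigvl_imply_analytic_eigvc} avoids both issues by working globally and one eigenvalue at a time: once the $\mu_i$ are analytic, set $L=A-\mu_1 I$, use the Smith form over the relevant ring of analytic functions to extract a global nowhere-vanishing null vector, normalize it (Lemma~\ref{lem:normalization}), complete to a unitary via Gram--Schmidt (Lemma~\ref{lem:GS}), deflate, and induct. No contours, no local-to-global gluing, and crossings are invisible to the construction. Transplanting that deflation argument to the real-interval setting (as in the cited \cite{wimmer}) would close your gap.
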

The main goal of this paper is to analyze how and when Rellich's theorem can be generalized to the case of a matrix that is analytic and Hermitian on the unit circle. While it is tempting to conjecture that such a generalization is a corollary of Theorem \ref{thm:rellich} via an appropriate change of variable such as, e.g., $x=e^{it}$, we will see that actually the different topology of the unit circle induces several subtleties and complications with respect to the case of the real line.

There are at least two motivations to study this problem and attempt to clarify the above mentioned subtleties. On one hand, in the context of signal processing several algorithms of practical importance rely on the numerical computation of an eigendecomposition of matrices that are analytic and Hermitian on the unit circle $S^1$ (called para-Hermitian in the signal processing literature): see \cite{EVD1} and the references therein for a survey of the numerous engineering applications. A related problem is to compute the singular value decomposition of matrices that are analytic on $S^1$. The signal processing literature has mainly focused on the development of numerical algorithms that could achieve the task of approximating these decompositions. Some papers did tackle the mathematical theory, but occasionally imprecise or incorrect statements have appeared. The article \cite{EVD2} stands out in terms of mathematical quality: indeed, it provided an almost complete proof of an analogue of Rellich's theorem on the unit circle. Unfortunately, though, the proof in \cite{EVD2} still had a gap concerning the existence of the (orthonormal) eigenvectors. The present paper aims to fill this gap and to provide a fully complete theoretical analysis. On the other hand, in the context of numerical linear algebra it is known that Rellich's theorem provides one way to define the sign characteristics of real eigenvalues of Hermitian matrix polynomials and Hermitian matrix functions: see \cite{glrind,glr,mntx} and the references therein. The sign characteristic is important also for the unimodular eigenvalues of *-palindromic matrix polynomials, another class of structured matrix polynomials of interest in numerical linear algebra \cite{m4}. In fact, a *-palindromic matrix polynomial is equal to a para-Hermitian matrix times a (not always analytic, depending on the parity of the grade of palindromicity) scalar function. Thus, the theory developed in this paper is also useful for the study of the sign characteristics of *-palindromic matrix polynomials.

The structure of the paper is as follows. In Section \ref{sec:bg}, we recall background material including previous generalizations of Rellich's Theorem, previous partial results on the case of matrix-valued functions Hermitian on the unit circle, and notions of algebra and analysis that are useful to our further developments. In Section \ref{sec:evd}, we prove our main results concerning the existence of unitary decompositions for certain classes of matrix-valued functions that are Hermitian on the unit circle, including analytic para-Hermitian matrices that have only analytic eigenvalues (they admit an analytic unitary eigendecomposition) and para-Hermitian matrices that are analytic functions of $w=z^{1/N}$ for some positive integer $N$ (they admit a unitary eigendecomposition in the same class); we also discuss how to further generalize Rellich's theorem to matrix-valued functions that are analytic and Hermitian on an arbitrary line or on an arbitrary circle in the complex plane, and we give a proof of the existence of the so-called pseudo-circulant eigendecomposition. In Section \ref{sec:svd}, we apply our results to discuss the singular value decomposition of any matrix, possibly rectangular, whose entries belong to the previously mentioned class of functions that are analytic in $w=z^{1/N}$ on the unit circle, for some $N$; this is an extension of a result proved in \cite{Weissnew} for matrices that are analytic on the unit circle $(N=1)$, and even for that case our proof is new. In Section \ref{sec:signchar} we turn our attention to the implications for the theory of the sign characteristics of unimodular eigenvalues of $*$-palindromic matrix polynomials. We finally draw some conclusions in Section \ref{sec:conclusions}.

\section{Background material}\label{sec:bg}
Recall that if $z=x+i y$ ($x,y \in \mathbb{R}$) its complex conjugate is ${\ol z}=x-\textnormal iy$. For a matrix $A=[a_{ij}] \in \C^{m \times n}$, we denote its conjugate transpose by $A^*=[{\ol a_{ji}}] \in \C^{n \times m}$.

Rellich's theorem was previously generalized in several papers, such as \cite{adkins,grater,wimmer}. There, the authors extended the result to different kind of functions on subsets of the complex plane, but always under the assumption that the matrix function $A(x)$ is Hermitian on a real interval. As a consequence, they defined the conjugation operation on square matrices as $A(z) = [a_{i,j}(z)]_{1 \leq i,j \leq n} \to A(z)^* = [\ol {a_{j,i}(\ol z)}]_{1 \leq i,j \leq n}$. The reason for this choice stems from the classic representation of scalar power series and its canonical conjugation operation as
\[
f(z) = a_0 + a_1 z +a_2 z^2 +\dots \quad \longrightarrow \quad f(z)^* = \ol{f(\ol z)} =  \ol a_0 + \ol a_1 z +\ol a_2 z^2 +\dots
\]
from which one can easily derive that $f(z)^* = \ol{f(\ol z)} = \ol {f(z)}$ when $z$ is a real number, but in general for no other point.  Moreover, in \cite{adkins,grater,wimmer} the authors always focused on functions defined on a domain $\Omega$ that is symmetric with respect to the real line: this implies that the conjugation $^*$ is an isomorphism of $\mathcal H(\Omega)$, the ring of holomorphic functions on $\Omega$.

In this paper, we are interested in $\HC$, defined as the space of function that are holomorphic on the unit circle $S^1:=\{z \in \C : |z|=1  \}$ (note in passing that, if a function $f(z) \in \HC$, then $f(z)$ also has an holomorphic extension on some open neighbourhood of $S^1$). As a consequence, we will work with matrices $A(z)\in \HCM{n}$ that are Hermitian for every $z\in S^1$. 
\begin{definition}\label{def:pHc}
Given $A(z)= [a_{i,j}(z)]_{1 \leq i\leq m,1\leq j \leq n} \in \HC^{m\times n}$, its para-Hermitian conjugate is defined as $A(z)^P = [\ol {a_{j,i}(\ol z^{-1})}]_{1 \leq i\leq m,1\leq j \leq n}$. Moreover a matrix $R(z) \in \HCM{n}$ is called a para-Hermitian matrix if $R(z)=R(z)^P$ and a matrix $U(z) \in \HCM{n}$ is called para-unitary if $U(z)U(z)^P=U(z)^PU(z)=I$.
\end{definition}
\begin{example}\label{ex1}
Consider
$$A(z) = \begin{bmatrix}
1+\textnormal i & z\\
\exp(z) & 0
\end{bmatrix}, \qquad R(z)=\begin{bmatrix}
0 & 1+z^{-1}\\
1+z & 0
\end{bmatrix}, \qquad U(z)=\frac{1}{\sqrt{2}} \begin{bmatrix}
z&z\\
z&-z
\end{bmatrix}.$$
Then $R(z)$ is para-Hermitian, $U(z)$ is para-unitary, and $A(z)^P=\begin{bmatrix}
1-\textnormal i & \exp(z^{-1})\\
z^{-1} & 0
\end{bmatrix}.$
\end{example}

Note that for a scalar function $f(z)^P:=\ol{f(\ol z^{-1})} = \ol {f(z)}$ when $z\in S^1$, but in general for no other point. Para-Hermitian conjugation as defined in Definition \ref{def:pHc} is indeed an isomorphism of the ring $\HC$ and for any $z_0 \in S^1$, $A(z_0)^P = A(z_0)^*$. Observing that para-Hermitian (resp. para-unitary) matrices are Hermitian (resp. unitary) on $S^1$, it is a natural question whether an analogue of Rellich's Theorem \ref{thm:rellich} holds for para-Hermitian matrices.
\begin{question}\label{q1}
	Given a para-Hermitian matrix $A(z)\in \HCM{n}$, when does it admit an analytic unitary eigendecomposition? That is, do there always exist $U(z),D(z)\in \HCM n$ such that $U(z)$ is para-unitary, $D(z)$ is diagonal real for all $z \in S^1$, and $A(z) = U(z)D(z)U(z)^P$?
\end{question}
\noindent The authors of \cite{EVD2} answered Question \ref{q1} negatively, by providing the counterexample $R(z)+2I_2$, where $R(z)$ is the para-Hermitian matrix of Example \ref{ex1}\footnote{Adding $2 I_2$ to $R(z)$ makes the para-Hermitian matrix also positive semidefinite, in the sense that its eigenvalue functions become nonnegative on $S^1$; in signal processing, an important role is played by the so-called cross spectral density matrices, which are both para-Hermitian and positive semidefinite. Having this in mind, the authors of \cite{EVD2} wanted to construct a physically relevant counterexample. Here we are mainly focused on the mathematical aspects of the discussion, and positive semidefiniteness is not relevant to our goals.}.

To see why $R(z)$ (and hence $R(z)+2I_2$) does not have $S^1$-analytic eigenvalues, note that $\tr R(z)=0,\det R(z)=-(z+z^{-1}+2)$, and hence the eigenvalues of $R(z)$ are $\lambda_{\pm}=\pm(z^{1/2}+z^{-1/2})$. This example exposes a striking difference between Hermitian analytic matrix functions (whose eigenvalues are always all real-analytic) and para-Hermitian analytic matrices (that may have some eigenvalues that are not analytic on the whole $S^1$). In fact, in \cite{EVD2} it is stated that any para-Hermitian matrix $R(z) \in \HCM n$ has a unitary eigendecomposition in the space of 
convergent Laurent series with respect to the variable $z^{1/N}$ for some positive integer $N$; moreover, it is claimed that if the eigenvalues happen to be holomorphic then a unitary decomposition actually exists over $\HC$. While the claims in \cite{EVD2} are correct, the proof therein is not fully complete. Indeed, the authors only directly prove the existence of the eigenvalues; for the eigenvectors, they attribute a proof of their existence to Rellich, but in fact this is not an immediate corollary of neither Theorem \ref{thm:rellich} nor its generalizations that have appeared in the literature. 
To fill this subtle gap, a new argument is thus necessary to properly complete the proof of the existence of the various eigendecompositions. Below, we will complete the proofs in \cite{EVD2} and fill in all the missing justifications and details.


\subsection{Elementary divisor domains, holomorphic functions and Laurent series}\label{sec:Laurent}
In this subsection, we recall some basic notions in algebra \cite{EDD} and in analysis \cite{Rudin} that are needed in the paper.

\noindent Recall that in ring theory an integral domain $R$ is a ring (with unity) that does not contain nonzero divisors of zero. A square matrix $A \in R^{n \times n}$ is called non-singular, or regular, if $\det A \neq 0$, or equivalently if there is no nonzero $v \in R^n$ such that $Av=0$; and it is called unimodular if $\det A$ is a unit (invertible element) of $R$, or equivalently if there exists $A^{-1} \in R^{n \times n}$ such that $A^{-1}A=AA^{-1}=I$. The set of units of $R$ is a group with respect to multiplication, and it is denoted by $R^\times$.

Algebraically, since $S^1$ is a connected set, the space $\HC$ is an Elementary Divisors Domain (EDD) \cite[Theorem 1.5.3]{EDD}, and in particular an integral domain. Even more strongly, since $S^1$ is also compact, then $\HC$ is in fact a Principal Ideal Domain (PID) \cite[Corollary 1.2.7, Lemma 1.3.8]{EDD}. As a consequence, any matrix $A(z)\in \HC^{n\times m}$ admits a Smith canonical Form \cite[Theorem 1.14.1]{EDD}. We state this result below in Theorem \ref{thm:smith}. To this goal, recall that the rank of a matrix over any integral domain is the maximal size of a minor with non-vanishing determinant. Moreover, observe that the units, i.e., invertible elements, of $\HC$ are the analytic functions with no zeros on $S^1$, or more formally $\HC^\times :=\{ f(z) \in \HC \ \mathrm{s.t.} \ f(z_0) \ne 0 \ \forall \ z_0 \in S^1 \}$.
\begin{theorem}\label{thm:smith}
Let $A(z)\in \HC^{n\times m}$. Then, there exist unimodular matrices $M(z)\in \HCM{n}$ and $N(z)\in \HCM m$ such that $S(z)=M(z)A(z)N(z)$ is (possibly rectangular and) diagonal, with diagonal elements $s_1(A), s_2(A),\ldots, s_r(A),0,\dots,0$ where  $r$ is the rank of $A(z)$ and  $s_{j-1}(A) | s_j(A)$ for $j=2,\dots,r$. The matrix $S(z)$ is uniquely determined (up to multiplication of its nonzero diagonal elements by units in $\HC^\times$) by $A(z)$ and it is called the \emph{Smith canonical form} of $A(z)$.
\end{theorem}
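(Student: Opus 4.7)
The plan is to invoke the classical Smith normal form theorem for matrices over a principal ideal domain, leveraging the fact (just recorded in the paper) that $\HC$ is a PID. The cited reference \cite[Theorem 1.14.1]{EDD} already states the conclusion in the more general EDD setting; what I would do is sketch the constructive algebraic argument both to keep the discussion self-contained and to fix notation for later use.

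For existence, the single algebraic ingredient needed is the B\'ezout property in a PID: for any $a,b \in \HC$ there exist $u,v \in \HC$ with $ua+vb=d$, where $d=\gcd(a,b)$. This allows one to form a unimodular $2\times 2$ block that replaces the pair $(a,b)$ by $(d,0)$. The reduction then runs as follows: select an entry of $A(z)$ whose principal ideal is maximal among the ideals generated by entries of $A$ (such a maximal element exists because the PID $\HC$ is Noetherian), and move it to position $(1,1)$ by unimodular permutations. Apply B\'ezout reductions pairwise along the first row and the first column; whenever the current pivot fails to divide some entry outside the first row and column, a single row or column addition brings a non-divisible element into that row or column, and a further B\'ezout step strictly enlarges the ideal generated by the pivot. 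The ascending chain condition on principal ideals in $\HC$ forces this process to terminate, leaving a pivot that divides every entry of the trailing $(n-1)\times(m-1)$ block; that pivot plays the role of $s_1(A)$, and recursion yields the full diagonal form together with the divisibility chain $s_{j-1}(A)\mid s_j(A)$.

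For uniqueness modulo $\HC^\times$, I would use the determinantal divisors argument: let $\Delta_k(A)\in\HC$ denote the gcd of all $k\times k$ minors of $A(z)$, with the convention $\Delta_0(A)=1$. Unimodular row and column operations preserve each $\Delta_k$ up to units, and a direct computation on a diagonal matrix respecting the divisibility chain gives $\Delta_k(S)=s_1(A)\, s_2(A)\cdots s_k(A)$; applied to the identity $S(z)=M(z)A(z)N(z)$, this forces $s_k(A)=\Delta_k(A)/\Delta_{k-1}(A)$ in $\HC/\HC^\times$, pinning each invariant factor down up to multiplication by an element of $\HC^\times$.

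The main difficulty is conceptual rather than technical: the whole argument rests on the structural fact that $\HC$ is a PID, which in turn relies on the compactness of $S^1$ (so that a nonzero $f\in\HC$ has only finitely many zeros on $S^1$, and divisibility is governed by a finite combinatorial datum attached to those zeros and their multiplicities). Once this structural input has been cited, the Smith-form proof reduces to the familiar algebraic routine; the only point that genuinely requires care with respect to the $\mathbb{Z}$- or $\C[z]$-case is that termination of the reduction cannot be enforced by a degree or Euclidean function, and must instead be argued by the ascending chain condition on principal ideals.
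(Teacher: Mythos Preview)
The paper does not actually prove Theorem~\ref{thm:smith}: it is stated as background material and attributed to \cite[Theorem 1.14.1]{EDD}, relying on the previously cited facts that $\HC$ is a PID (indeed an EDD). Your sketch is a correct and standard outline of the PID argument---B\'ezout reductions for existence, terminated via the ascending chain condition, and determinantal divisors for uniqueness---and is in fact more detailed than anything the paper itself offers for this statement.
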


Changing our viewpoint from algebra to analysis, observe that $f(z) \in \HC$ if and only if it can be represented as a Laurent series $f(z)=\sum_{n\in\f Z} a_nz^n$ with radii of convergence $r<1<R$, that is, with coefficients $a_n$ that are exponentially decreasing on both sides. This in turn be equivalently rewritten as $F(\theta) := f(e^{\textnormal i \theta})$ being a $2\pi$-periodic analytic function on $\f R$. Conversely, given such a function $F(\theta)$ one can generate the original $f(z)$ substituting $z$ to $e^{\textnormal i\theta}$ in its Fourier series and automatically obtain $f(z)\in\HC$.

A Laurent series $\sum_{n\in \f Z} a_nz^n$ is said to be convergent when it is holomorphic on an open annulus containing $S^1$, that is $|a_n|^{1/n}\xrightarrow{n\to +\infty} R^{-1} < 1$ and $|a_n|^{1/n}\xrightarrow{n\to -\infty} r < 1$, where $r,R$ are the inner and outer radii of the annulus. Analogously, for any positive integer $N$, we can consider the series $\sum_{n\in \f Z} a_nz^{n/N}$ and say it is convergent if the associated Laurent series $\sum_{n\in \f Z} a_nz^{n}$ is convergent.
Here the fractional exponential is defined for any $z\ne 0$ as $z^{1/N} := |z|^{1/N} \exp(\textnormal i \arg(z)/N)$ with $\arg(z)\in ]-\pi,\pi]$. 
 We can thus define $\mathcal H_N(S^1)$ to be the space of convergent Laurent series with variable $z^{1/N}$ and $\mathcal P(S^1) := \bigcup_{N>0}   \mathcal H_N(S^1)$. Note that $\HC =\mathcal H_1(S^1)\cu \mathcal P(S^1)$ and that the (scalar) para-Hermitian conjugation of Definition \ref{def:pHc} $f(z)^P:=\ol{f(\ol z^{-1})}$ is still well-defined on $\mathcal P(S^1)$ and it still coincides with the classic complex conjugation on $S^1$, since  $\ol z^{1/N}= \ol {z^{1/N}}$ for all $z\ne 0$. 
 Moreover, $z\mapsto z^{1/N}$ is holomorphic on $\f C\setminus (\f R^- \cup \{0\})$ and $S^1$ is invariant. As a consequence, if $f(z)\in \HC$ then necessarily $f(z^{1/N}) \in\PC$ is still holomorphic on an open neighbourhood of $S^1$ minus the negative real semiline, but  
  in general $f(z^{1/N})$  is not even continuous on $S^1$. At the same time, consider the function $\wt F(\theta) = f(\exp(\textnormal i\theta)^{1/N})$ on the real interval $]-\pi,\pi]$ and observe that on this domain it coincides with $F(\theta) =  f(\exp(\textnormal i\theta/N))$, but $F(\theta)$ is analytic on $\f R$ and $2\pi N$-periodic. $\wt F(\theta)$ is thus analytic in $]-\pi,\pi[$ and continuous at $\pi$. Conversely, given such a function $F(\theta)$ one can generate the original $f(z^{1/N})$ substituting $z$ to $e^{\textnormal i\theta}$ in its Fourier series and automatically obtain $f(z)\in\mathcal H_N(S^1)$.

\section{Unitary eigendecompositions of some classes of matrix-valued functions that are Hermitian on the unit circle}\label{sec:evd}
\subsection{Existence of the holomorphic unitary eigendecomposition in the case of holomorphic eigenvalues}\label{sec:goodevals}


We start by analyzing a special situation that will also be useful to tackle the more general case. That is, we assume that all the eigenvalues of the para-Hermitian $A(z)\in\HCM n$ are in $\HC$. This assumption suffices to establish an $\HC$-analogue of Rellich's theorem. Our proof below mainly follows the lead of \cite{wimmer}, generalizing the results therein to the case of our interest.

\begin{lemma}\label{lem:normalization}
	Let $w(z)\in \HC^n$ be such that $w(z_0)\ne 0$ for all $z_0 \in S^1$. Then there exists $\gamma(z) \in \HC^\times$ such that for $v(z) = w(z)\gamma(z)$ we have $v(z)^P v(z) = 1$. 
\end{lemma}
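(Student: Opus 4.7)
The plan is to reduce the vector-valued normalization problem to the scalar problem of finding $\gamma(z) \in \HC^\times$ with $\gamma^P \gamma = f^{-1}$, where $f(z) := w(z)^P w(z) \in \HC$. Indeed, any such $\gamma$ would yield $v^P v = \gamma^P w^P w \gamma = (\gamma^P \gamma)\, f = 1$ by commutativity of scalar multiplication. Note that $f$ is self-para-Hermitian, $f^P = f$, because $P$ is an involutive anti-automorphism; equivalently, $f$ is real-valued on $S^1$. Moreover, by the hypothesis $w(z_0) \ne 0$ we have $f(z_0) = \|w(z_0)\|^2 > 0$ for all $z_0 \in S^1$, so by continuity $f$ is nonvanishing in some annular neighborhood of $S^1$.

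The heart of the matter is to construct a scalar holomorphic logarithm $g \in \HC$ of $f$, i.e., a $g$ satisfying $\exp g = f$. I would argue as follows: since $f(S^1) \subset \mathbb{R}_{>0}$, the winding number $\frac{1}{2\pi \textnormal i} \oint_{S^1} f'(z)/f(z)\, dz$ vanishes, and this is well-known to guarantee the existence of a single-valued holomorphic logarithm on the annular neighborhood of $S^1$ on which $f$ is nonzero. Equivalently, and more concretely, $\log \|w(e^{\textnormal i \theta})\|^2$ is a real-analytic, $2\pi$-periodic function of $\theta$; its Fourier series, read as a Laurent series in $z = e^{\textnormal i \theta}$, furnishes a $g(z) \in \HC$ with $\exp g = f$ on $S^1$, and hence on a neighborhood of $S^1$ by the identity theorem.

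Finally, I would set $\gamma(z) := \exp(-g(z)/2) \in \HC^\times$ (nonvanishing because it is an exponential). Since $g$ is real-valued on $S^1$, one has $g^P(z) = \overline{g(\bar z^{-1})} = \overline{g(z)} = g(z)$ there, whence $g^P = g$ throughout $\HC$ by the identity theorem; and because $\exp(h)^P = \exp(h^P)$ for every $h \in \HC$, it follows that $\gamma^P = \gamma$ and therefore $\gamma^P \gamma = \exp(-g) = f^{-1}$, as required. The main obstacle is the construction of $g$: a generic nonvanishing function in $\HC$ need not admit a holomorphic logarithm (think of $f(z) = z$), and the hypothesis that $w$ is nowhere zero on $S^1$ is used precisely to force the winding number of $f$ around $0$ to vanish, making the logarithm exist.
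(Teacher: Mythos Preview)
Your proof is correct and follows essentially the same route as the paper: both reduce to showing that the strictly positive function $f(z)=w(z)^P w(z)\in\HC$ admits a para-Hermitian square root in $\HC^\times$, and both justify this by passing to the real-analytic $2\pi$-periodic function $\theta\mapsto f(e^{\textnormal i\theta})$ (the paper applies $x\mapsto x^{-1/2}$ directly to obtain $\gamma$, whereas you first build a holomorphic logarithm $g$ via $x\mapsto\log x$ and then set $\gamma=\exp(-g/2)$). Your winding-number justification for the existence of $g$ is a pleasant complex-analytic alternative that the paper does not mention, but the underlying mechanism---the correspondence between $\HC$ and real-analytic $2\pi$-periodic functions---is the same in both arguments.
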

\begin{proof}
	Since for any $z_0\in S^1$ the value $w(z_0)$ is never zero and  $w(z_0)^P = w(z_0)^*$, we have that $w(z_0)^Pw(z_0) = \|w(z_0)\|^2>0$. Using the characterization of $\HC$ in terms of Laurent series (see Subsection \ref{sec:Laurent}), and since $w(z)^Pw(z) \in \HC$, the function $\alpha(\theta)  = w(e^{\textnormal i\theta})^Pw(e^{\textnormal i\theta})$ is real, positive, $2\pi$-periodic and analytic on $\f R$. Moreover, the function $x\mapsto x^{-1/2}$ is analytic on $]0,\infty[$, so 
	$\beta(\theta) := \alpha(\theta)^{-1/2} = \|w(e^{\textnormal i\theta})\|^{-1}$ is still analytic, positive, and $2\pi$-periodic. It thus induces a function $\gamma(z)\in\HC^\times$ such that $\gamma(e^{\textnormal i\theta}) = \beta(\theta)= \|w(e^{\textnormal i\theta})\|^{-1}$. If now $v(z)= w(z)\gamma(z)$, then we have
	$v(z_0)^Pv(z_0) = \|v(z_0)\|^2 = 1$  for any $z_0\in S^1$, but $v(z)^Pv(z) \in \HC$, so it must necessarily be identically 1 on the domain of convergence of $v(z)$.
\end{proof}

\begin{lemma}\label{lem:GS}
	Given two column vectors $v(z),w(z) \in \HC^{n}$, define  the `para-Hermitian scalar product' $\langle v(z),w(z)\rangle :=  v(z)^P w(z)$. 
	Given a unimodular matrix $N(z) \in\HCM n$, the Gram-Schmidt algorithm applied to its columns with respect to the para-Hermitian scalar product $\langle \cdot,\cdot\rangle$ produces a para-unitary matrix.
\end{lemma}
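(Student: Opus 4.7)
The plan is to run the Gram–Schmidt procedure formally over the ring $\HC$, and to argue that every division that the algorithm asks for is in fact a division by a unit of $\HC$. Concretely, write the columns of $N(z)$ as $n_1(z),\dots,n_n(z)$ and define inductively $u_1(z)=n_1(z)$ and
\[
u_k(z)=n_k(z)-\sum_{j=1}^{k-1}\frac{\langle u_j(z),n_k(z)\rangle}{\langle u_j(z),u_j(z)\rangle}\,u_j(z).
\]
Then, at the end, I would normalize each $u_k(z)$ to a vector $v_k(z)=u_k(z)\gamma_k(z)$ by Lemma \ref{lem:normalization}, and let $V(z)$ be the matrix whose columns are the $v_k(z)$.

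The only non-formal step is to justify that the denominators $\langle u_k(z),u_k(z)\rangle$ belong to $\HC^\times$, so that the recursion stays inside $\HC^n$. This I would prove by induction on $k$. Since $N(z)$ is unimodular, $\det N(z)\in\HC^\times$, hence $\det N(z_0)\ne 0$ for every $z_0\in S^1$, so the values $n_1(z_0),\dots,n_n(z_0)$ are linearly independent in $\C^n$ at every point of the circle. Now, for any fixed $z_0\in S^1$, the para-Hermitian product coincides with the standard Hermitian inner product, i.e. $\langle v(z_0),w(z_0)\rangle=v(z_0)^*w(z_0)$. Hence at each $z_0\in S^1$ our recursion is exactly the classical Gram–Schmidt on linearly independent vectors of $\C^n$, which is well known to produce nonzero orthogonal vectors $u_1(z_0),\dots,u_n(z_0)$. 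In particular $\langle u_k(z_0),u_k(z_0)\rangle=\|u_k(z_0)\|^2>0$ for every $z_0\in S^1$, so $\langle u_k(z),u_k(z)\rangle\in\HC^\times$, and the inductive step goes through.

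Once this is in place, Lemma \ref{lem:normalization} applied to each $u_k(z)$ furnishes $\gamma_k(z)\in\HC^\times$ such that $v_k(z)^Pv_k(z)=1$. The off-diagonal relations $v_j(z)^Pv_k(z)=0$ for $j\ne k$ follow from the standard Gram–Schmidt identity over any ring where the divisions are legal, together with the bilinearity of $\langle\cdot,\cdot\rangle$ in the second slot and its para-Hermitian behaviour in the first. Thus $V(z)^PV(z)=I$. Because $V(z)$ is square over the integral domain $\HC$, the identity $V(z)^PV(z)=I$ forces $\det V(z)\in\HC^\times$, hence $V(z)$ is invertible over $\HC$ and the two-sided inverse is necessarily $V(z)^P$, giving also $V(z)V(z)^P=I$. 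Therefore $V(z)$ is para-unitary, completing the proof.

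The main (and essentially only) obstacle is precisely the unit check on the denominators in the recursion; I expect this to be the one place where the assumption of unimodularity of $N(z)$ is used, and where the passage from pointwise positivity on $S^1$ to global invertibility in $\HC$ must be made carefully. Everything else is formal bookkeeping that is identical to the classical Euclidean Gram–Schmidt procedure.
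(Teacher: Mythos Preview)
Your proof is correct. Both your argument and the paper's run Gram--Schmidt and use the unimodularity of $N(z)$ to guarantee that the procedure stays inside $\HC^n$, but the organization differs slightly. The paper normalizes each vector \emph{before} orthogonalizing the remaining columns against it, so every orthogonalization step has the form $v\mapsto v-w\langle w,v\rangle$ with $w^Pw=1$ and no division is ever needed; it then checks that each such step (and each normalization by a unit $\gamma(z)$) preserves the determinant up to a unit, hence preserves unimodularity of the intermediate matrix. You instead defer all normalizations to the end and must therefore justify that the denominators $\langle u_j,u_j\rangle$ are units of $\HC$; your pointwise argument (at each $z_0\in S^1$ the recursion is classical Gram--Schmidt on linearly independent vectors of $\C^n$, so $\|u_k(z_0)\|^2>0$) handles this cleanly. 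The two routes are equivalent: the paper's avoids ever discussing invertibility of a scalar, while yours stays closer to the textbook formulation of Gram--Schmidt and makes more transparent use of the fact that the para-Hermitian product agrees with the Euclidean one on $S^1$.
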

\begin{proof}
	Observe that the unimodularity of the matrix $N(z)\in \HCM n$ is equivalent to $\det N(z)$ begin a unit of $\HC$, or in other words an analytic function without zeros on $S^1$. Hence, for any vector $v(z)$ which is a column of $N(z)$, necessarily we have $\det(N(z_0))\ne 0\implies v(z_0)\ne 0$ for all $z_0\in S^1$.
	
	We need to show that all the steps of the Gram-Schmidt method are actually well-defined inside the ring $\HC$ and preserve the unimodularity of the matrix $N(z)$ to which it is applied. More formally, we want to show that the Gram-Schmidt algorithm implicitly defines a (finite) sequence of unimodular matrices $N_k(z) \in \HC^{n \times n}$ where $N_0(z):=N(z)$. Observe that each step in the Gram-Schmidt sequence constructs $N_{k+1}(z)$ from $N_k(z)$ either by normalizing a column or by orthogonalizing a column with respect to another one.
	
	Let us first consider the normalization steps. From Lemma \ref{lem:normalization} we know that we can always normalize a vector in $\HC^{n}$ that is never zero on $S^1$ by multiplication times a function $\gamma(z)\in \HC^\times$. Suppose that, before such a normalization step, the matrix $N_k(z)$ in the sequence is unimodular, that is, $\det N_k(z) \in \HC^\times$; then, the normalization produces a new matrix $N_{k+1}(z)$ and $\det N_{k+1}(z)=\gamma(z) \det N_k(z) \in \HC^\times$. In other words, the normalization steps within the Gram-Schmidt algorithm preserve the unimodularity of the matrices in the sequence $N_k(z)$, and are always well-defined in $\HC$.
	
	It remains to discuss what happens during the orthogonalization steps. There, we take two columns $v(z),w(z)$ of the unimodular matrix $N_h(z)$ in the sequence, where $w(z)^Pw(z)=1$, and then replace $v(z)$ by the vector 
	$\wt v(z) = v(z) - w(z)\langle v(z),w(z)\rangle$. Since para-Hermitian conjugation is an isomorphism of $\HC$, and since $\HC$ is a ring, clearly $\wt v(z)\in \HC^n$. Moreover the step is equivalent to setting $N_{h+1}(z)=N_h(z) [I-e_i e_j^T \langle v(z),w(z) \rangle]$, where $i,j$ are indices such that $v(z)=N_h(z) e_j$ and $w(z)=N_h(z) e_i$. Hence, $\det N_{h+1}(z)=\det N_h(z)$ and the orthogonalization step also preserves unimodularity of the matrices in the sequence.
	
	Hence, the Gram-Schmidt algorithm can be applied to the columns of any unimodular matrix $N(z)\in\HCM n$, and it eventually construct as an output another unimodular matrix. Moreover, the output is a matrix $U(z)\in\HCM n$ with orthonormal columns, that, is $U(z)^PU(z) =I$. 
\end{proof}

\begin{theorem}\label{theo:analytic_eigvl_imply_analytic_eigvc}
	Given a para-Hermitian matrix $A(z) \in \HCM n$ and $n$ functions $\mu_1(z),\dots,\mu_n(z) \in \HCM n$ such that the eigenvalues of $A(z_0)$ are precisely  $(\mu_1(z_0),\dots,\mu_n(z_0)) \in \mathbb{R}^n$ for any $z_0\in S^1$, then there exist a para-unitary matrix $U(z)\in \HCM{n}$ such that $A(z) = U(z)D(z)U(z)^P$, where $D(z) \in \HCM n$ is diagonal and its $(i,i)$ element, for $i=1,\dots,n$, is equal to $\mu_i(z)$. 
\end{theorem}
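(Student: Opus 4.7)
The plan is to prove the theorem by induction on $n$, at each stage isolating a single $\HC$-analytic eigenvector for $\mu_1(z)$ that is nowhere vanishing on $S^1$, and then deflating to reduce to the $(n-1) \times (n-1)$ case. The base case $n=1$ is immediate, taking $U(z) \equiv 1$.

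For the inductive step, I would first consider $B(z) := A(z) - \mu_1(z) I \in \HCM{n}$. By hypothesis $\det B(z_0) = 0$ for every $z_0 \in S^1$; since $\det B \in \HC$ is holomorphic on some open neighbourhood of $S^1$, and $S^1$ contains accumulation points of its own, the identity principle for holomorphic functions forces $\det B \equiv 0$. Hence $B$ has rank strictly less than $n$ as a matrix over the PID $\HC$ (see Subsection~\ref{sec:Laurent}). The Smith canonical form (Theorem~\ref{thm:smith}) then provides unimodular $M, N \in \HCM{n}$ such that $MBN = S$ is diagonal with $S_{n,n} = 0$. Setting $v(z) := N(z) e_n \in \HC^n$, I get $Bv = 0$, i.e., $Av = \mu_1 v$; and crucially $v(z_0) \neq 0$ for every $z_0 \in S^1$, because $N(z_0)$ is invertible there.

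Next, by Lemma~\ref{lem:normalization} I can rescale $v$ by a unit $\gamma \in \HC^\times$ to obtain $\tilde v := \gamma v \in \HC^n$ with $\tilde v^P \tilde v = 1$. Right-multiplying $N$ by $\diag(1,\dots,1,\gamma)$ replaces its last column by $\tilde v$ without altering unimodularity; a further column swap produces a unimodular matrix in $\HCM{n}$ whose first column is $\tilde v$. Applying the para-Hermitian Gram-Schmidt procedure of Lemma~\ref{lem:GS} to this matrix yields a para-unitary $U_1 \in \HCM{n}$ whose first column is still $\tilde v$, because the first column is already normalized and each subsequent column is only modified by orthogonalization against the preceding ones. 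Since $U_1^P \tilde v = e_1$, the first column of $U_1^P A U_1$ equals $\mu_1 e_1$; moreover $U_1^P A U_1$ is para-Hermitian and $\mu_1^P = \mu_1$ (since $\mu_1$ is real on $S^1$ and analytic, by the identity principle), so the first row is likewise $(\mu_1, 0, \dots, 0)$. Therefore $U_1^P A U_1 = \diag(\mu_1, A_2)$ for some para-Hermitian $A_2 \in \HCM{n-1}$ whose eigenvalues on $S^1$ are exactly $\mu_2(z_0), \dots, \mu_n(z_0) \in \HC$. The inductive hypothesis applied to $A_2$ produces para-unitary $U_2$ with $A_2 = U_2 \diag(\mu_2, \dots, \mu_n) U_2^P$, and finally $U := U_1 \cdot \diag(1, U_2)$ together with $D := \diag(\mu_1, \dots, \mu_n)$ completes the decomposition.

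The main subtle point, and the only place where the argument is not a routine combinatorial manipulation, is guaranteeing that the chosen eigenvector is nowhere vanishing on $S^1$ rather than just generically nonzero: this is precisely what the unimodularity of $N$ in the Smith form of $B$ buys, and it is indispensable since without it Lemma~\ref{lem:normalization} cannot even be applied. The remaining ingredients, namely the Gram-Schmidt completion to a para-unitary matrix and the preservation of para-Hermiticity under deflation, are straightforward consequences of the tools already established in the paper.
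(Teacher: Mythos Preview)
Your proof is correct and follows essentially the same route as the paper's: both arguments take the Smith form of $A(z)-\mu_1(z)I$ over the PID $\HC$ to produce a nowhere-vanishing eigenvector, normalize it via Lemma~\ref{lem:normalization}, complete to a para-unitary matrix through Gram--Schmidt (Lemma~\ref{lem:GS}), and deflate inductively. The only notable difference is in how the normalized eigenvector $\tilde v$ is extended to a full unimodular matrix before Gram--Schmidt: the paper argues that $\tilde v^P\tilde v=1$ makes $\tilde v$ left invertible and invokes a separate trivial-Smith-form result to obtain an invertible $Y$ with $Y\tilde v=e_1$, whereas you more economically recycle the unimodular matrix $N$ already in hand from the Smith form of $B$, scaling and permuting its columns so that $\tilde v$ sits in the first position---a small but genuine simplification.
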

\begin{proof}
	From the assumptions, we know that $\det(\lambda I - A(z_0)) = \prod_{i=1}^n (\lambda - \mu_i(z_0))$ for every $z_0\in S^1$. 
	Since $\mu_i(z_0)$ is an eigenvalue of the Hermitian matrix $A(z_0)$ for all $z_0\in S^1$, it holds that $\mu_i(S^1)\cu \f R$.  As a consequence, $\mu_i(z) =\mu_i(z)^P$, as  for any $z_0\in S^1$ they coincide since   $\mu_i(z_0)=\ol{\mu_i(z_0)}=\mu_i(z_0)^P$.  
	Let $L(z) := A(z) - \mu_1(z) I\in \HCM n$  and note that $\det(L(z_0))=0$ for all $z_0\in S^1$, but since $\det(L(z))\in \HC$ we have that $\det(L(z)) =0$.
	In addition, $L(z)$ is para-Hermitian because $L(z)^P=A(z)^P - \mu_1(z)^PI = L(z)$.  $\HC$ is a PID, and hence an EDD (see Section \ref{sec:Laurent}). Therefore  $L(z)$ admits a Smith Normal Form $L(z) = M(z)S(z)N(z)$ with $M(z),N(z)$ invertible and $S(z)$ diagonal and such that, for all $i$, its $i$-th diagonal element divides its $(i+1)$-th diagonal element. Since the rank of $L(z)$ is not full, by the divisibility property stated above the last diagonal element of $S(z)$ is surely $0$ and $S(z)e_n = 0\implies L(z) N(z)^{-1}e_n = 0$. Defining $w(z) := N(z)^{-1}e_n$, then $\det(N(z_0)^{-1})\ne 0\implies w(z_0) = 
	N(z_0)^{-1} e_n \ne 0$ for all $z_0\in S^1$.  From Lemma \ref{lem:normalization}, we can normalize $w(z)$ to obtain a null vector of unit norm $v(z)\in \HC^{n}$, i.e., $v(z)^Pv(z) = 1$  and $L(z) v(z) = 0$. 
Moreover, the equation $v(z)^P v(z)=1$ shows that $v(z)$ is left invertible, and thus it has a trivial Smith normal form \cite[Theorem 3.3]{zaballa}, i.e., there is an invertible (over $\HC$) matrix $Y(z)$ such that $Y(z)v(z)=e_1$.		Let $y_i(z)$ be the columns of $Y(z)$. By Lemma \ref{lem:GS}, we can orthonormalize the invertible matrix $\big[ y_2(z) \, y_3(z)\,\dots\,y_n(z)\, y_1(z)\big]$ into the para-unitary matrix $\big[ \wt y_2(z) \,\wt  y_3(z)\,\dots\,\wt y_n(z)\, \wt y_1(z)\big]$, but since $y_i(z)^Pv(z) = 0$ for all $i>1$ then it also holds that $\wt y_i(z)^Pv(z) = 0$ for all $i>1$ and as a consequence the matrix $V_1(z) = \big[v(z)\, \wt y_2(z) \,\wt  y_3(z)\,\dots\,\wt y_n(z)\big]$ is also para-unitary. From the relations $L(z) = A(z) - \mu_1(z) I$, $L(z)v(z) = 0$, $V_1(z)^PV_1(z) = I$ and $L(z)^P=L(z)$ it is clear that
	\[
	V_1(z)^P A(z) V_1(z) - \mu_1(z)I = 
	V_1(z)^P L(z) V_1(z) = \begin{bmatrix}
	0 & \\
	& \wt L_1(z) 
	\end{bmatrix}
	\implies 
	V_1(z)^P A(z) V_1(z) =
	\begin{bmatrix}
	\mu_1(z) & \\
	& L_1(z) 
	\end{bmatrix}
	\]
	where $L_1(z) = \wt L_1(z) + \mu_1(z)$ is still para-Hermitian, and $L_1(z_0)$ has eigenvalues $(\mu_2(z_0),\dots,\mu_n(z_0))$ for any $z_0\in S^1$. We can thus proceed by induction, producing a sequence of para-unitary matrices $V_i(z)$ such that 
	\[
	V_k(z)^P\cdots V_2(z)^PV_1(z)^P A(z) V_1(z)V_2(z)\cdots V_k(z)  =
	\begin{bmatrix}
	\mu_1(z) & & &\\
	&\ddots & &\\
	& & \mu_k(z) &\\
	& & & L_k(z) 
	\end{bmatrix}
	\]
	with $L_k(z)$ para-Hermitian for all $k\le n$. If now $U(z) = \prod_{i=1}^{n} V_i(z)$ we obtain the unitary analytic eigendecomposition $A(z) = U(z)D(z)U(z)^P$ with $[D(z)]_{i,i} = \mu_i(z)$ for all $i=1,\dots,n$. 
\end{proof}

\begin{remark}
The function $z\mapsto \ol z$ is, of course,  not holomorphic on any open domain. However, let $\Omega \subset \C$ be either a circle or a line. Then, it can be shown that there exists a holomorphic M\"obius transformation that coincide with $\ol z$ on $\Omega$. Namely,
\begin{itemize}
\item Given $\beta \in \C$ and $\theta \in ]-\pi,\pi]$, consider the (generic) line $\Omega=\{ z= t e^{\textnormal i \theta} + \beta \ \mathrm{for} \ \mathrm{some} \ t \in \mathbb{R}\}$. On this line, ${\ol z}\mid_\Omega = \left(\alpha z + \gamma\right)\mid_\Omega$ where $\alpha=e^{-2 \textnormal i \theta}$ and $\gamma={\ol \beta}-\beta  e^{-2 \textnormal i \theta}$.
\item Given $\beta \in \C$ and $\rho>0$, consider the (generic) circle $\Omega=\{z=\beta+\rho e^{ \textnormal i a} \ \mathrm{for} \ \mathrm{some} \ a \in ]-\pi,\pi]\}$. On this circle, ${\ol z} \mid_\Omega= \frac{{\ol \beta} z + \alpha}{z-\beta}\mid_\Omega$ where $\alpha=\rho^2-|\beta|^2$.
\end{itemize}
Conversely, if $f(z)$ is a M\"{o}bius transform, then it either has the form $f(z)=\alpha z + \beta$ or the form $f(z)=(\alpha z+\beta)/(z+\delta)$.
In the former case, imposing that the solutions to $f(z)={\ol z}$ are not just isolated point yields $|\alpha|=1$, and we obtain that the locus of the solutions is a generic line. 
In the latter case, again imposing that $f(z)={\ol z}$ has more than just isolated solutions, we get that $\alpha=-{\ol \delta}$, $-|\delta|^2<\beta \in \mathbb{R}$, and the locus of the solutions is a generic circle.
Important special cases include the following: if $\Omega=\mathbb{R}$ then $\ol z \mid_{\f R} = z \mid_{\f R}$; if $\Omega=\textnormal i \mathbb{R}$ then $\ol z \mid_{\textnormal i \f R}= -z\mid_{\textnormal i \f R}$; if $\Omega=S^1$ then $\ol z\mid_{S^1} = z^{-1} \mid_{S^1}$. 
Hence, Rellich's Theorem \ref{thm:rellich} generalizes to the case of matrices that are Hermitian over a generic line, and Theorem \ref{theo:analytic_eigvl_imply_analytic_eigvc} generalizes to the case of matrices that are Hermitian over a generic circle.

In general, let $K$ be a connected set and suppose that $\ol z = g(z)$ on $K$ where $g(z)$ is holomorphic on some open connected neighbourhood of $K$. In addition, it is possible to prove that $g(z)$ is injective (up to restricting the domain). As a consequence, $f(z) \mapsto \ol{f(g^{-1}(\ol z))}$ is an isomorphism of the ring $\mathcal H(K)$ such that $\ol{f(g^{-1}(\ol z))}=\ol{f(z)}$ for any $z\in K$. Theorem \ref{theo:analytic_eigvl_imply_analytic_eigvc} can thus be generalized to matrices with entries in $\mathcal H(K)$ who are Hermitian with respect to the conjugation $f(z) \mapsto \ol{f(g^{-1}(\ol z))}$. 
\end{remark}

\subsection{Existence of the unitary eigendecomposition over $\PC$}\label{sec:existenceinPC}

In Subsection \ref{sec:goodevals}, we have proved that any para-Hermitian matrix $A(z) \in \HCM{n}$ whose eigenvalues are all analytic on $S^1$ has an analytic unitary eigendecomposition over $\mathcal H(S^1)$. In \cite[Theorem 1]{EVD2}, it was claimed that more generally an $n \times n$ para-Hermitian matrix in $\HCM{n}$ has a unitary eigendecomposition as a product of matrices that lie in the space $\PC^{n \times n}$. While the theorem is true, the proof in \cite{EVD2} had a gap that we aim to fill in the present subsection. In fact, we are going to prove a stronger statement. We start by generalizing Definition \ref{def:pHc} to matrices over $\PC$.

\begin{definition}\label{def:pHc2}
Given $A(z)= [a_{i,j}(z)]_{1 \leq i \leq m,1 \leq j \leq n} \in \PC^{m \times n}$, its para-Hermitian conjugate is defined as $A(z)^P = [\ol {a_{j,i}(\ol z^{-1})}]_{1 \leq i \leq m,1 \leq j \leq n}$. Moreover a matrix $R(z) \in \PC^{n \times n}$ is called a para-Hermitian matrix if $R(z)=R(z)^P$ and a matrix $U(z) \in \PC{n \times n}$ is called para-unitary if $U(z)U(z)^P=U(z)^PU(z)=I$.
\end{definition}

We are now going to prove that every para-Hermitian $A(z) \in \mathcal P(S^1)^{n\times n}$ have an analytic unitary eigendecomposition on the space $\mathcal P(S^1)$. We thus will obtain \cite[Theorem 1]{EVD2} (that assumed the stronger requirement $A(z) \in \HCM{n}$) as a corollary. Our starting step is a reordering lemma for the eigenvalues.  Here and below, recall that given a permutation $\sigma \in S_n$ and an element $i \in \{1,\dots,n \}$, the \emph{orbit} of $i$ is the set of points in the cycle containing $i$; the orbits of $\sigma$ are thus the subsets of $\{1,\dots,n\}$ corresponding to each cycle of $\sigma$.

\begin{lemma}\label{lem:permutation}
Let $A(\theta)$ be an analytic $2\pi N$-periodic $n\times n$ Hermitian matrix on $\f R$ with analytic eigenvalues $\mu_1(\theta),\dots,\mu_n(\theta)$. There exists a permutation $\sigma$ on $\{1,\dots,n\}$  satisfying $\mu_i(\theta + 2\pi N) = \mu_{\sigma(i)}(\theta)$ for all $i$ and all $\theta$. One can find such a permutation where if $\mu_i(\theta)$ has period $2\pi kN$ with $k$ the least possible positive integer to obtain a period, then the orbit of $\sigma$ containing $i$ has length $k$.
\end{lemma}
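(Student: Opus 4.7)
The plan is to combine the identity principle for real-analytic functions with a careful choice of permutation within equivalence classes of globally coincident eigenvalues. First, I would set $g_i(\theta) := \mu_i(\theta + 2\pi N)$, which is analytic on $\mathbb{R}$. Since $A(\theta+2\pi N)=A(\theta)$, the characteristic polynomial $\det(XI - A(\theta))$ factors both as $\prod_{i=1}^n(X-\mu_i(\theta))$ and as $\prod_{i=1}^n(X-g_i(\theta))$; by unique factorisation in the polynomial ring over the integral domain of real-analytic functions on $\mathbb{R}$, the multisets $\{\mu_1,\dots,\mu_n\}$ and $\{g_1,\dots,g_n\}$ coincide as multisets of analytic functions. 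In particular, for each $i$ there exists $j$ with $g_i \equiv \mu_j$.

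Next, I would introduce the equivalence relation $i \sim i'$ iff $\mu_i \equiv \mu_{i'}$, with equivalence classes $C_1,\dots,C_s$. The multiset equality above shows that the assignment sending $i$ to the class containing any $\mu_j$ with $g_i \equiv \mu_j$ induces a well-defined permutation $\bar\sigma$ of $\{C_1,\dots,C_s\}$, and that the number of indices $i$ with $g_i$ of class $C$ equals $|C|$, whence $|\bar\sigma(C)|=|C|$. Refining $\bar\sigma$ to a bijection $\sigma:\{1,\dots,n\}\to\{1,\dots,n\}$ reduces to choosing, for each class $C$, any bijection $C \to \bar\sigma(C)$; any such choice satisfies $\mu_{\sigma(i)}(\theta) = g_i(\theta) = \mu_i(\theta+2\pi N)$, proving the first assertion.

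For the period-orbit assertion I would exploit the residual freedom in these intra-class bijections. For an orbit $C_{i_0}\to C_{i_1}\to\cdots\to C_{i_{\ell-1}}\to C_{i_0}$ of $\bar\sigma$, iterating the defining identity for $i \in C_{i_0}$ shows $\mu_i(\theta+2\pi\ell N)=\mu_i(\theta)$; conversely, if $\mu_i$ has minimal period $2\pi k N$ then $\mu_{\sigma^k(i)}(\theta)=\mu_i(\theta+2\pi kN)=\mu_i(\theta)$ forces $\sigma^k(i)\in C_{i_0}$, i.e., $\bar\sigma^k(C_{i_0})=C_{i_0}$. Together these give $\ell=k$; in particular every element of every class in this $\bar\sigma$-orbit has the same minimal period, and all such classes share a common size $m$. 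Fixing any labelling $C_{i_t}=\{i_{t,1},\dots,i_{t,m}\}$ and defining $\sigma(i_{t,r}):=i_{(t+1)\bmod k,\,r}$ then makes the restriction of $\sigma$ to $\bigcup_t C_{i_t}$ consist of $m$ disjoint $k$-cycles, so each index in this union has $\sigma$-orbit of length exactly $k$. I expect the most delicate point to be the multiset-of-analytic-functions argument in the first paragraph, since the functions $\mu_i$ need not be pairwise distinct and a naive pointwise matching (which would be ambiguous at crossing points) has to be replaced by a global factorisation argument; once $\bar\sigma$ is correctly set up on equivalence classes, the orbit-length refinement is a routine cyclic construction.
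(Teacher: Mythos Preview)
Your proof is correct and follows a somewhat different route from the paper's. For the existence of $\sigma$, the paper picks a generic point $\theta_0$ at which the distinct eigenvalue functions take pairwise distinct values, matches eigenvalues pointwise there using $A(\theta_0)=A(\theta_0+2\pi N)$, and then invokes the identity theorem to propagate the matching to all of $\mathbb{R}$; you instead compare the two factorisations of the characteristic polynomial globally over the integral domain of real-analytic functions. One small caveat: the polynomial ring over an arbitrary integral domain need not be a UFD, but comparing two products of \emph{monic linear} factors only requires passing to the field of fractions (or a direct induction via evaluating at a root), so your conclusion stands. For the orbit-length refinement, the paper starts from an arbitrary valid $\sigma$, observes that if $\mu_i$ has least period $2\pi pN$ then within an orbit $(i_1,\dots,i_q)$ one has $\mu_{i_s}\equiv\mu_{i_r}$ whenever $r\equiv s+p\pmod q$, and uses this to split each long cycle into $q/p$ cycles of length $p$; you instead first pass to the canonical permutation $\bar\sigma$ on equivalence classes of coincident eigenvalues, show directly that its orbit lengths are exactly the desired $k$'s, and then lift $\bar\sigma$ to $\sigma$ by an explicit cyclic labelling of the classes in each $\bar\sigma$-orbit. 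Your approach is more structural in that it cleanly separates the canonical class-level data from the free intra-class choices, while the paper's is a more hands-on cycle surgery; both reach the same conclusion with comparable effort.
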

\begin{proof}
	Let $\Gamma:= \{\mu_1(\theta), \dots, \mu_q(\theta)\}$ be,  without loss of generality up to a relabelling of the indices, the subset of distinct eigenvalues. Recall that, here, $\mu_i$ and $\mu_j$ are considered equal if they are the same analytic function of $\theta$; or in other words, if $\mu_i(\theta)=\mu_j(\theta)$ $\forall$ $\theta\in \f R$. 
	Since the set of points where any two distinct analytic functions coincide is a discrete set \cite[Theorem 10.18]{Rudin}, there necessarily exists $\theta_0\in ]-\pi,\pi[$ such that  $\mu_1(\theta_0), \dots, \mu_q(\theta_0)$ are $q$ distinct values.
	Since $A(\theta_0)= A(\theta_0 + 2\pi N)$, they have the same eigenvalues up to permutation, so there exists a permutation $\sigma$ such that $\mu_i(\theta_0 + 2\pi N) = \mu_{\sigma(i)}(\theta_0)$. 
	The eigenvalues in $\Gamma$ are distinct in some open neighbourhood $\Omega$ of $\theta_0$, so  $\mu_i(\theta + 2\pi N) = \mu_{\sigma(i)}(\theta)$ for all $i=1,\dots,n$ and for all $\theta \in \Omega$ in such neighbourhood. To conclude the proof, observe that by the identity theorem \cite[Corollary of Theorem 10.18]{Rudin} they must actually coincide on all $\f R$.

	Consider the orbit of the index $i$, that is,  $\{\sigma^{\circ k}(i) = j : k\in \f N\}$.
	Given now an eigenvalue $\mu_i(\theta)$ with period  $2\pi pN$  where $p$ is the least possible positive integer to obtain a period, let $q$ be the length of the orbit of $\sigma$ containing $i$. Necessarily we have $p|q$, and moreover $\mu_i(\theta) = \mu_i(\theta + 2\pi pN) = \mu_{\sigma^{\circ p}(i)} (\theta)$ and this relation holds  for every $\mu_j(\theta)$ with $j$ in the same orbit of $i$. 
	If the orbit is $(i=i_1,i_2,i_3,\dots,i_q)$ then we have $\mu_{i_s}(\theta) = \mu_{i_{r}}(\theta)$ for every $s$ and every $r$ such that $r \equiv s+p \mod q$. This is enough to conclude that we can replace this orbit with $(i_1,\dots i_p)(i_{p+1},\dots, i_{2p})\dots(i_{q-p+1},\dots,i_q)$ and the new permutation still satisfies $\mu_i(\theta + 2\pi N) = \mu_{\sigma(i)}(\theta)$ and additionally the orbit of $\mu_i$ is now of the correct length $p$. 
	Repeating the same argument for all the eigenvalues, we end up with a permutation with the correct orbit lengths.   
\end{proof}

We are now ready to prove the main result in this Subsection, that is, Theorem \ref{theo:EVD_in_not_puiseux} below.

\begin{theorem}\label{theo:EVD_in_not_puiseux}
	Let $A(z) \in \PC^{n\times n}$ be a para-Hermitian matrix. There exists an eigendecomposition $A(z) = U(z)D(z)U(z)^P$, where $D(z) \in \PC^{n\times n}$ is diagonal and real for all $z \in S^1$ and $U(z) \in \PC^{n\times n}$ is para-unitary.
\end{theorem}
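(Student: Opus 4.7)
My plan is to reduce the statement to Theorem \ref{theo:analytic_eigvl_imply_analytic_eigvc} by first passing to the real-analytic picture and then extracting a suitable root of $z$. Since $A(z) \in \PC^{n\times n}$, there exists $N \in \f Z_{>0}$ with $A(z) \in \mathcal H_N(S^1)^{n\times n}$; as noted in Section \ref{sec:Laurent}, the matrix $F(\theta) := A(e^{\textnormal i\theta/N})$ is then analytic on $\f R$, Hermitian for every $\theta \in \f R$, and $2\pi N$-periodic. Applying Rellich's Theorem \ref{thm:rellich} to $F$ produces analytic functions $\mu_1(\theta),\dots,\mu_n(\theta)$ that list, for each $\theta \in \f R$, the eigenvalues of $F(\theta)$.

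The next step is to control how far from $2\pi N$-periodicity the individual $\mu_i$ are. This is exactly the purpose of Lemma \ref{lem:permutation}, which produces a permutation $\sigma$ whose orbit lengths $k_1,\dots,k_n$ coincide with the minimal periods of $\mu_1,\dots,\mu_n$ measured in units of $2\pi N$. Letting $M := \mathrm{lcm}(k_1,\dots,k_n)$, each $\mu_i$ is $2\pi MN$-periodic and hence corresponds to an element of $\mathcal H_{MN}(S^1) \subset \PC$.

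Now I make the change of variable $y = z^{1/(MN)}$ and set $\wt A(y) := A(y^{MN})$. Since the map $y\mapsto y^{MN}$ sends $S^1$ analytically into $S^1$, $\wt A$ belongs to $\HCM n$ in the variable $y$, and a direct check using Definition \ref{def:pHc2} together with $\ol{y^{MN}} = \ol y^{MN}$ shows that $\wt A(y)$ is para-Hermitian with respect to the $y$-conjugation. Its eigenvalues $\wt\mu_i(y) := \mu_i(y^{MN})$ all belong to $\HC$ in $y$ by the previous step. Theorem \ref{theo:analytic_eigvl_imply_analytic_eigvc} applied to $\wt A(y)$ therefore yields a para-unitary $\wt U(y) \in \HCM n$ and a real diagonal $\wt D(y) \in \HCM n$ with $\wt A(y) = \wt U(y) \wt D(y) \wt U(y)^P$. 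Setting $U(z) := \wt U(z^{1/(MN)})$ and $D(z) := \wt D(z^{1/(MN)})$, both matrices lie in $\mathcal H_{MN}(S^1)^{n\times n} \subset \PC^{n\times n}$, and the identity $\ol{z^{1/L}} = \ol z^{1/L}$ of Section \ref{sec:Laurent} ensures that the $y$- and $z$-para-Hermitian conjugations are compatible under the substitution, yielding the desired factorization $A(z) = U(z) D(z) U(z)^P$.

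The main obstacle is concentrated in the second step: after applying Rellich, one must certify that the analytic eigenvalue branches descend to some $\mathcal H_L(S^1)$, and this could fail a priori for a generic $2\pi N$-periodic Hermitian analytic family because the branches can permute nontrivially under one period. Lemma \ref{lem:permutation} is designed precisely to control this permutation and to pin down the correct multiplier $M$; once it is in hand, the remainder of the argument is essentially change-of-variable bookkeeping, with the principal branch identity $\ol{z^{1/L}} = \ol z^{1/L}$ doing the work of transferring the para-Hermitian structure back from the $y$-picture.
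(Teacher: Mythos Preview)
Your proof is correct and essentially identical to the paper's: both pass to a $2\pi N$-periodic Hermitian real-analytic family, apply Rellich's Theorem, invoke Lemma \ref{lem:permutation} to obtain a common period for the eigenvalue branches (your $M=\mathrm{lcm}(k_1,\dots,k_n)$ is exactly the order $L$ of $\sigma$ in the paper, so your $MN$ is the paper's $M=NL$), and then apply Theorem \ref{theo:analytic_eigvl_imply_analytic_eigvc} after the substitution $y=z^{1/(MN)}$. The only difference is cosmetic---the paper explicitly names an auxiliary $B\in\HCM n$ with $A(z)=B(z^{1/N})$ and works with $B(y^L)$ rather than your $\wt A(y)=A(y^{MN})$, which has the minor advantage of avoiding the slight notational ambiguity in expressions like $F(\theta)=A(e^{\textnormal i\theta/N})$ and $\wt\mu_i(y)=\mu_i(y^{MN})$ when $A\in\mathcal H_N(S^1)$ and $\mu_i$ is a function of $\theta$.
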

\begin{proof}
Note that $\mathcal H_\alpha(S^1) \cu \mathcal H_{\alpha\beta}(S^1)$ for any pair of positive integers $\alpha,\beta$, so we can find an $N$ such that $A(z)\in \mathcal H_{N}(S^1)$, namely if $a_{i,j}(z) \in \mathcal H_{\alpha_{i,j}}(S^1)$, then take $N$ as the least common multiple of all the $\alpha_{i,j}$. 
As a consequence there exists $B(z)\in \HC$ such that $A(z) = B(z^{1/N})$. Moreover $A(z_0)$ is Hermitian for all $z_0\in S^1$ since $A(z)$ is para-Hermitian.  
Let
$A(\theta) : =  B(\exp(\textnormal i\theta/N ))$ on $\theta \in \f R$ and notice it coincides with $A(\exp(\textnormal i\theta ))$ for $\theta\in (-\pi,\pi]$  and it is $2\pi N$-periodic. 
Since $A(\theta)$ is analytic and Hermitian on $\f R$, by Rellich's theorem there exists a unitary eigendecomposition $A(\theta) = U(\theta) D(\theta) U(\theta)^*$ with analytic entries on $\f R$. 
Call $\mu_i(\theta)$ the analytic diagonal entries of $D(\theta)$.
From Lemma \ref{lem:permutation}, there exists a permutation $\sigma \in S_n$ such that $\mu_i(\theta + 2\pi N) = \mu_{\sigma(i)}(\theta)$, and given its associated permutation matrix $P_\sigma$ we obtain $D(\theta+2\pi N) = P_{\sigma}D(\theta) P_\sigma^T$
If now $L$ is the order of $\sigma$, then $P_\sigma^L = I$ and $D(\theta + 2\pi NL) = D(\theta)$. If we call $M:= NL$, this proves that all the eigenvalues $\mu_i(\theta)$ are $2\pi M$-periodic analytic functions and as a consequence $\mu_i(M\theta)$ are $2\pi$-periodic and analytic. The matrix $A(M\theta) = B(\exp(\textnormal iL\theta))$ is $2\pi$-periodic (actually $2\pi/L$-periodic) and analytic, and one can conclude that $B(z^L)\in\HCM n$ has eigenvalues $\lambda_i(z)\in \HC$ with $\lambda_i(\exp(\textnormal i\theta)) =  \mu_i(M\theta)$ for any $\theta$. From Theorem \ref{theo:analytic_eigvl_imply_analytic_eigvc}, $B(z^L)$ admits an eigendecomposition $B(z^L) = V(z) \Sigma(z) V(z)^P$ on $\HC$ where $\Sigma(z)$ is diagonal and $[\Sigma(z)]_{i,i} = \lambda_i(z)$ for all $i=1,\dots,n$. This allows us to conclude that 
\[
A(z) = B(z^{1/N}) = B((z^{1/M})^L) = V(z^{1/M}) \Sigma(z^{1/M}) V(z^{1/M})^P
\]
is the sought eigendecomposition over $\PC$.
\end{proof}

As anticipated, an immediate consequence of Theorem \ref{theo:EVD_in_not_puiseux} is that every para-Hermitian matrix $A(z)\in \HCM n$ admits a $z^{1/N}$-analytic unitary eigendecomposition over $\PC$, or more specifically over $\mathcal H_N(S^1)$ for some integer $N\ge 1$. 

\begin{example}
The para-Hermitian matrix $R(z) \in \HCM n$ of Example \ref{ex1} admits the unitary eigendecomposition
\[
\left( \frac{1}{\sqrt{2}}\begin{bmatrix}
1& z^{1/2}\\
-1 & z^{1/2}
\end{bmatrix}\right)\begin{bmatrix}
z^{1/2}+z^{-1/2} & 0\\
0 & -z^{1/2}-z^{-1/2}
\end{bmatrix}\left( \frac{1}{\sqrt{2}}\begin{bmatrix}
1& z^{1/2}\\
-1 & z^{1/2}
\end{bmatrix}\right)^P  =:U(z) D(z) U(z)^P.\]
It is clear that neither $U(z)$ nor $D(z)$ belong to $\HC^{2 \times 2}$; they both do, however, belong to $\mathcal H_2(S^1)^{2 \times 2}$ and hence, a fortiori, to $\PC^{2 \times 2}$.
\end{example}

Let $A(z) \in \HC^{n \times n}$. Then, $A(z)$ has a unitary eigendecomposition in $\PC^{n\times n}$ by Theorem \ref{theo:EVD_in_not_puiseux}, but this eigendecomposition is in general not unique. However, one can actually prove that the existence of one eigendecomposition for which the periods of eigenvalues and their associated eigenvectors coincide. This is stated more precisely in Proposition \ref{lem:periodicity_coincide} below.

\begin{proposition}\label{lem:periodicity_coincide}
	Let $A(z) \in \HC^{n\times n}$ be a para-Hermitian matrix, and suppose $\mu_1(z),\dots,\mu_n(z)\in \PC$ are the eigenvalues of $A(z)$. Moreover, for all $i=1,\dots,n$, let $\alpha_i \in \mathbb{N}$ be the smallest integer such that $\mu_i(z) \in \mathcal H_{\alpha_i}(S^1)$. Then, there exists an eigendecomposition $A(z) = U(z)D(z)U(z)^P$, where (1) $D(z)$ is diagonal with $[D(z)]_{i,i}=\mu_i(z)$ for all $i=1,\dots,n$ and (2) $U(z)$ is para-unitary and its $i$-th column $u_i(z):=U(z)e_i$ belongs to $\mathcal H_{\alpha_i}(S^1)$ for all $i=1,\dots,n$.
\end{proposition}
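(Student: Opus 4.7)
The plan is to combine Rellich's theorem on the universal cover $\f R$ with a two-stage application of Theorem \ref{theo:analytic_eigvl_imply_analytic_eigvc}: first over $\HC$ to block-diagonalize $A(z)$ according to the irreducible factors of its characteristic polynomial, and then inside each block over the appropriate ring $\mathcal H_{\alpha_h}(S^1)$ to separate the branches of that factor. Viewing $A(e^{\textnormal i\theta})$ as a $2\pi$-periodic analytic Hermitian function on $\f R$, Theorem \ref{thm:rellich} yields analytic eigenvalues $\wt\mu_j(\theta)$ and orthonormal eigenvectors $\wt u_j(\theta)$, and Lemma \ref{lem:permutation} provides a permutation $\sigma$ satisfying $\wt\mu_j(\theta+2\pi)=\wt\mu_{\sigma(j)}(\theta)$ whose orbit through $j$ has length $\alpha_j$. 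Factor $\det(\lambda I - A(z)) = \prod_h \chi_h(\lambda,z)^{m_h}$ over $\HC[\lambda]$ into distinct monic irreducible factors, with $\deg \chi_h = \alpha_h$; each $\chi_h$ corresponds to a $\sigma$-invariant subset $O_h\subseteq \{1,\dots,n\}$ of size $\alpha_h m_h$, containing the indices $i$ with $\mu_i$ a root of $\chi_h$.

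The main step is to show that the orthogonal projector
\[
P_h(\theta) := \sum_{j\in O_h} \wt u_j(\theta)\wt u_j(\theta)^*
\]
is $2\pi$-periodic, so that $P_h(z) \in \HCM n$ is a para-Hermitian idempotent in $\HCM n$. Off the discrete set of $\theta$ at which two distinct $\chi_h$ and $\chi_{h'}$ share a root, $P_h(\theta)$ coincides with the spectral projector of $A(\theta)$ associated to the $\alpha_h$ distinct roots of $\chi_h(\lambda,e^{\textnormal i\theta})$, a quantity that depends only on $A(\theta)$ and the coefficients of $\chi_h$ and is therefore $2\pi$-periodic. Hence $P_h(\theta+2\pi)=P_h(\theta)$ on a dense open set, and by analyticity of $\wt u_j$ the identity extends to all of $\f R$. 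This is the main obstacle of the argument; once it is settled, the remainder is a routine application of Theorem \ref{theo:analytic_eigvl_imply_analytic_eigvc}.

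Applying Theorem \ref{theo:analytic_eigvl_imply_analytic_eigvc} to the para-Hermitian matrix $M(z):=\sum_h c_h P_h(z) \in \HCM n$ for distinct real constants $c_h$: its eigenvalues are the constants $c_h$ with multiplicity $|O_h|$, and, after prescribing a grouping of the diagonal by $O_h$, the theorem produces a para-unitary $Q(z)\in\HCM n$ whose columns are partitioned by the $O_h$. Since $A(z)$ and $P_h(z)$ are simultaneously diagonalized by $\wt u_j(\theta)$ at every $\theta\in \f R$, they commute on $S^1$ and hence as elements of $\HCM n$; therefore $Q(z)^P A(z) Q(z)$ is block-diagonal, with one para-Hermitian block $B_h(z) \in \HC^{|O_h|\times|O_h|}$ per $h$, whose eigenvalues are exactly the $\alpha_h$ distinct roots of $\chi_h$, each with multiplicity $m_h$.

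All eigenvalues of $B_h(z)$ lie in $\mathcal H_{\alpha_h}(S^1)$. The substitution $z\leftrightarrow w^{\alpha_h}$ induces a ring isomorphism $\mathcal H_{\alpha_h}(S^1)\cong \HC$ that commutes with para-Hermitian conjugation, so Theorem \ref{theo:analytic_eigvl_imply_analytic_eigvc} transferred through this isomorphism and applied to $B_h$ viewed in $\mathcal H_{\alpha_h}(S^1)^{|O_h|\times|O_h|}$ yields an eigendecomposition $B_h(z)=W_h(z) D_h(z) W_h(z)^P$ with $W_h$ para-unitary and $D_h$ diagonal, both in $\mathcal H_{\alpha_h}(S^1)^{|O_h|\times|O_h|}$; by prescribing the labelling, we may arrange $[D_h(z)]_{ii}=\mu_i(z)$ for every $i\in O_h$. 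Then $U(z):=Q(z)\,\mathrm{diag}(W_1(z),\dots,W_p(z))$ is the required para-unitary matrix: its $i$-th column, lying in block $h$, is a $\mathcal H_{\alpha_i}(S^1)$-linear combination of $\HC$-valued columns of $Q(z)$ and hence belongs to $\mathcal H_{\alpha_i}(S^1)^n$.
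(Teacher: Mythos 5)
Your proposal is correct, but it takes a genuinely different route from the paper. The paper starts from an eigendecomposition over $\PC$ (Theorem \ref{theo:EVD_in_not_puiseux}) and repairs it one eigenvalue at a time: for an eigenvalue $\mu$ of multiplicity $q$ and minimal period $2\pi\alpha$ it forms the rank-$q$ matrix $A_1(\theta)=\wt V(\theta)\mu(\theta)\wt V(\theta)^*$, observes that (being $\mu$ times a spectral projector at generic $\theta$) it is $2\pi\alpha$-periodic, and applies Theorem \ref{theo:analytic_eigvl_imply_analytic_eigvc} to $A_1(\alpha\theta)$ to replace the eigenvectors of $\mu$ by ones of the same period. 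You never invoke Theorem \ref{theo:EVD_in_not_puiseux}: instead you group the spectrum by monodromy orbits (equivalently, by irreducible factors of the characteristic polynomial over $\HC[\lambda]$), show that the orbit projectors $P_h$ are $2\pi$-periodic and hence lie in $\HCM{n}$, block-diagonalize $A(z)$ over $\HC$ by applying Theorem \ref{theo:analytic_eigvl_imply_analytic_eigvc} to the auxiliary para-Hermitian matrix $\sum_h c_h P_h(z)$ with constant eigenvalues, and then diagonalize each block after transporting Theorem \ref{theo:analytic_eigvl_imply_analytic_eigvc} through the isomorphism $\mathcal H_{\alpha_h}(S^1)\cong\HC$; this yields as a by-product an $\HC$ block-diagonalization with one para-Hermitian block per orbit, close in spirit to Theorem \ref{theo:pseud-circulant_dec}, at the price of extra machinery. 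Two assertions should be spelled out, though neither is a genuine gap: (i) the dictionary between irreducible factors and orbits, i.e.\ that $\deg\chi_h$ equals the common minimal-period index $\alpha_i$ of its roots, that all roots of $\chi_h$ carry the same multiplicity $m_h$, and that $O_h$ is $\sigma$-invariant --- this follows because the monic polynomial whose simple roots are the monodromy orbit of any root of $\chi_h$ has $2\pi$-periodic analytic coefficients, hence lies in $\HC[\lambda]$ and must equal $\chi_h$ by irreducibility; (ii) that the columns of $Q(z)$ in group $h$ span the range of $P_h(\theta)$ at \emph{every} $\theta$ (not only generically), so that the eigenvalues of $B_h$ are the roots of $\chi_h$ at every point of $S^1$ as Theorem \ref{theo:analytic_eigvl_imply_analytic_eigvc} requires; this holds because the $c_h$-eigenspace of $M(\theta)=\sum_h c_h P_h(\theta)$ coincides with the range of $P_h(\theta)$ for all $\theta$. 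Finally, a constant permutation at the end is needed to pass from your orbit-grouped ordering to the prescribed labelling $[D(z)]_{i,i}=\mu_i(z)$.
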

\begin{proof}
Let $\alpha := \alpha_1$ and $\mu(z) :=\mu_1(z)\in \mathcal H_\alpha(S^1)$, where $\mu(z)$ has multiplicity $q$ as an eigenvalue of $A(z)$. With the potential exception of some special values of $\theta$, that nevertheless  must belong to a discrete subset of $\f R$, $\mu(\theta) := \mu(\exp(\textnormal i\theta))$ is an eigenvalue of $A(\theta) := A(\exp(\textnormal i\theta))$ of algebraic multiplicity precisely $q$; it follows that, for almost all $\theta$, the relative eigenspace has dimension $q$. Suppose that $A(z) = \wt U(z) D(z)\wt U(z)^P$ is a unitary eigendecomposition whose existence is guaranteed by Theorem \ref{theo:EVD_in_not_puiseux}. Then, let $\wt V(z)$ be the $n\times q$ matrix whose columns are the eigenvectors in $\wt U(z)$ associated to $\mu(z)$, and $\wt V(\theta):=  \wt V(\exp(\textnormal i\theta))$. By the observation above, for almost every $\theta$ the matrix $A_1(\theta) := \wt V(\theta)\mu(\theta)\wt V(\theta)^*$ is a multiple of the projection on the eigenspace relative to $\mu(\theta)$. On the other hand, since $A(\theta)$ is $2\pi$-periodic and $\mu(\theta)$ is $2\pi\alpha$-periodic,  then $A_1(\theta)$ is a $2\pi\alpha$-periodic analytic matrix whose eigenvalues are also $2\pi\alpha$-periodic. From Theorem \ref{theo:analytic_eigvl_imply_analytic_eigvc} applied to $A_1(\alpha\theta)$, we have an analytic $2\pi\alpha$-periodic eigenvalue decomposition $A_1(\theta) = U_1(\theta)D_1(\theta)U_1(\theta)^*$ with just two distinct eigenvalues, that is, $\mu(\theta)$ with multiplicity $q$ and $0$ with multiplicity $n-q$. In particular, one can write $A_1(\theta) = V(\theta)\mu(\theta) V(\theta)^*$  with $V(\theta)$ being a $n\times q$ matrix with orthonormal columns that spans the correct eigenspace and with $2\pi\alpha$-periodic entries. This proves that in the eigendecomposition $A(z) = \wt U(z) D(z)\wt U(z)^P$ we can substitute the eigenvectors relative to $\mu(z)$ with the columns of $V(z)\in \mathcal H_{\alpha}(S^1)^{n\times d}$ associated to $V(\theta)$. Repeating the same procedure for all the eigenvalues, we obtain a unitary analytic eigendecomposition in which eigenvectors and eigenvalues have the same periods.
\end{proof}

\begin{remark}\label{rem:landau}
One may wonder, given the size $n$ of a matrix $A(z) \in \PC^{n \times n}$, how large $N$ could be at worst to guarantee that there exist $U(z),D(z) \in \mathcal H_N(S^1)^{n \times n}$ yielding  $A(z)=U(z)D(z)U(z)^P$.

For all $1\leq i,j \leq n$, define $\alpha_{ij}:=\min_{k \in \mathbb{N}} \{k: a_{i,j}(z) \in \mathcal{H}_k(S^1)\}$. Then, the proof of Theorem \ref{theo:EVD_in_not_puiseux} makes it clear that $N\leq LM$ where $M=\mathrm{lcm}\, \alpha_{i,j}$ and $L$ is the largest possible order of an element of the symmetric group $S_n$. The function $n \mapsto L(n)$ is sometimes referred to as Landau's function (H. Landau studied it in \cite{Landau}). It is possible to prove \cite{Massias} that $L(n) \leq \gamma \exp(\sqrt{n \log n})$ with $\gamma \simeq 1.05313$, and that asymptotically $L(n) \sim \exp(\sqrt{n \log n})$ \cite{Miller}.

In particular, if $A(z) \in \HC^{n \times n}$, then $M=1$ and we have the bound $N\leq L(n) \leq 1.0532 \cdot \exp(\sqrt{n \log n})$. See \cite[page 499]{Miller} for a list of exact values of $L(n)$ for $2 \leq n \leq 19$.

\end{remark}

We conclude this subsection summarizing the observations of Remark \ref{rem:landau} and stating them more formally as a corollary.

\begin{corollary}\label{cor:EVD_in_not_puiseux}
	Let $A(z) \in \HC^{n\times n}$ be a para-Hermitian matrix, and let $L(n)$ be the largest possible order of an element of the symmetric group $S_n$. Then, there exists an integer $N\leq L(n)$ and an eigendecomposition $A(z) = U(z)D(z)U(z)^P$, where $D(z) \in \mathcal{H}_N(S^1)^{n\times n}$ is diagonal and real for all $z \in S^1$ and $U(z) \in \mathcal{H}_N(S^1)^{n\times n}$ is para-unitary.
\end{corollary}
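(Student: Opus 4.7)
The plan is to revisit the argument in the proof of Theorem \ref{theo:EVD_in_not_puiseux} and track carefully the size of the integer $N$ that appears, specializing to the case $A(z) \in \HC^{n \times n}$. Since we now assume all entries $a_{i,j}(z)$ lie in $\HC = \mathcal{H}_1(S^1)$, in the notation of Remark \ref{rem:landau} we have $\alpha_{i,j} = 1$ for all $i,j$ and consequently $M = \mathrm{lcm}_{i,j}\,\alpha_{i,j} = 1$. Thus the only source of denominator in the exponent comes from the permutation step, and the goal reduces to showing that this denominator can be bounded by $L(n)$.

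First I would define $A(\theta) := A(e^{\textnormal i\theta})$, which is analytic and Hermitian on $\f R$ and, importantly, $2\pi$-periodic (not just $2\pi N$-periodic as in Theorem \ref{theo:EVD_in_not_puiseux}, because $M=1$). Applying the classical Rellich Theorem \ref{thm:rellich} yields analytic eigenvalues $\mu_1(\theta),\dots,\mu_n(\theta)$. Then I would invoke Lemma \ref{lem:permutation} with $N=1$ there, producing a permutation $\sigma \in S_n$ with $\mu_i(\theta + 2\pi) = \mu_{\sigma(i)}(\theta)$. Let $L$ denote the order of $\sigma$ in $S_n$; by definition of the Landau function, $L \le L(n)$. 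Since $P_\sigma^L = I$, iterating the permutation relation gives $D(\theta+2\pi L) = D(\theta)$, so every $\mu_i(\theta)$ is $2\pi L$-periodic.

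Next, following exactly the strategy of Theorem \ref{theo:EVD_in_not_puiseux}, I would consider the matrix $A(L\theta)$: this is analytic and $2\pi$-periodic in $\theta$, and therefore corresponds to some matrix $B(z) \in \HCM n$ via $B(e^{\textnormal i\theta}) = A(L\theta)$; moreover its eigenvalues $\lambda_i(z) \in \HC$ satisfy $\lambda_i(e^{\textnormal i\theta}) = \mu_i(L\theta)$, so they are analytic on $S^1$. Applying Theorem \ref{theo:analytic_eigvl_imply_analytic_eigvc} to $B(z)$ produces an analytic para-unitary eigendecomposition $B(z) = V(z)\Sigma(z)V(z)^P$ with $V(z),\Sigma(z) \in \HCM n$. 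Substituting $z \mapsto z^{1/L}$ gives
\[
A(z) = B(z^{1/L}) = V(z^{1/L})\,\Sigma(z^{1/L})\,V(z^{1/L})^P,
\]
which is an eigendecomposition in $\mathcal{H}_L(S^1)^{n\times n}$ with $L \le L(n)$. Setting $N := L$, $U(z) := V(z^{1/N})$, and $D(z) := \Sigma(z^{1/N})$ completes the proof.

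There is no real obstacle here: the statement is essentially a bookkeeping corollary once one has both Theorem \ref{theo:EVD_in_not_puiseux} (for existence of the decomposition over $\PC$) and the explicit bound from Lemma \ref{lem:permutation} combined with Landau's definition of $L(n)$. The only mildly subtle point is making sure that when $M=1$, the denominator $N$ appearing at the end of the construction is exactly the order of $\sigma$ rather than some larger multiple; this is transparent from the proof of Theorem \ref{theo:EVD_in_not_puiseux} since the product $NL$ there becomes $1 \cdot L = L$ in our specialized setting.
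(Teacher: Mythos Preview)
Your proposal is correct and follows essentially the same approach as the paper: the corollary is obtained precisely by specializing the proof of Theorem \ref{theo:EVD_in_not_puiseux} to the case $A(z)\in\HCM n$, so that $M=\mathrm{lcm}_{i,j}\,\alpha_{i,j}=1$ and the only contribution to the Puiseux denominator is the order $L=\mathrm{ord}(\sigma)\leq L(n)$ of the permutation from Lemma \ref{lem:permutation}. The paper states this explicitly in Remark \ref{rem:landau} and then records it as Corollary \ref{cor:EVD_in_not_puiseux}, exactly as you have done.
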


\subsection{Pseudo-circulant decomposition of holomorphic para-Hermitian matrices}

In Subsection \ref{sec:existenceinPC}, we have proved in Theorem \ref{theo:EVD_in_not_puiseux} that a para-Hermitian matrix $A(z)\in \HCM n$ always admits an eigendecomposition over $\PC$ (in fact over $\mathcal{H}_N(S^1)$ for some $N\leq L(n)$, by Corollary \ref{cor:EVD_in_not_puiseux}), and in Proposition \ref{lem:periodicity_coincide} that the eigenvectors can be chosen with the same period of the associated eigenvalues. Moreover, from Lemma \ref{lem:permutation} we know that there exists a permutation $\sigma \in S_n$ that changes the ordering of the eigenvalues order when considering points at a distance of precisely $2\pi$. The orbits of $\sigma$ partition the eigenvalues into $k$ subsets $C_\ell = \{\mu_{\ell,1},\dots, \mu_{\ell,\alpha_\ell}\}$ ($\ell=1,\dots,k$), such that there exists $\lambda_\ell(z)\in \HC$ for which 
\[
\mu_{\ell,\beta}( \exp(\textnormal i\alpha_\ell \theta) ) = \lambda_\ell( \exp(\textnormal i \theta) \exp(\textnormal i \pi (2\beta-1)/\alpha_\ell ) )  ) 
\quad \forall \theta \in (-\pi,\pi]/\alpha_\ell, \,\, \forall \beta=1,\dots,\alpha_\ell, \,\,\forall \ell=1,\dots,k.
\]In particular, for all $\ell,j$, the eigenvalues $\mu_{\ell,j}$ belong to $\mathcal H_{\alpha_\ell}(S^1)$. In what follows we show that $A(z)$ can be block diagonalized by para-unitary matrices over $\HC$, where the blocks correspond to the eigenvalue subsets $C_\ell$.
We report a full proof following the lead of \cite{EVD2}. The first step is to establish two technical lemmata.

\begin{lemma}\label{lem:D_N_e_F_N}
	Given a positive integer $N$, let $F_N$ be the $N\times N$ orthogonal Fourier matrix whose $(i,j)$ element is, for any $i,j=1,\dots,N$, equal to $N^{-1/2}\exp(\textnormal i 2\pi(i-1)(j-1)/N)$. Moreover, let $D_N(\theta)$ be the $N\times N$ diagonal matrix whose $(i,i)$ element is, for any $i=1,\dots,N$, equal to $\exp(\textnormal i \theta(i-1)/N)$. Then 
	\[
	D_N(\theta+2\pi) F_N = 
	D_N(\theta) F_N P_N, 
	\qquad P_N = \begin{bmatrix}
	& & & 1\\
	1 & & & \\
	& \ddots & &\\
	& & 1 &
	\end{bmatrix}.
	\]
\end{lemma}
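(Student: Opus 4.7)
The plan is to verify the identity column by column after reducing it to a $\theta$-independent statement. The first step is to observe that $D_N$ satisfies the homomorphism property $D_N(\theta+2\pi) = D_N(\theta)\,D_N(2\pi)$, which follows immediately from $\exp(\textnormal i(\theta+2\pi)(i-1)/N) = \exp(\textnormal i\theta (i-1)/N)\exp(\textnormal i 2\pi(i-1)/N)$ applied entrywise on the diagonal. Since $D_N(\theta)$ is invertible (its diagonal entries are nonzero exponentials), left-multiplying both sides of the claim by $D_N(\theta)^{-1}$ reduces the lemma to proving the single $\theta$-free identity $D_N(2\pi)\,F_N = F_N\,P_N$.

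Next, I would compute the $(i,k)$ entry of $D_N(2\pi)F_N$ directly: combining the two exponential factors yields $N^{-1/2}\exp(\textnormal i 2\pi (i-1)k/N)$. For $k \in \{1,\dots,N-1\}$, this expression coincides exactly with $(F_N)_{i,k+1}$. For $k=N$, the exponent becomes $\textnormal i 2\pi (i-1)$, which is an integer multiple of $2\pi\textnormal i$ and hence contributes a factor $1$; the $N$-th column of $D_N(2\pi)F_N$ is therefore the constant vector with every entry $N^{-1/2}$, which is precisely the first column of $F_N$.

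Finally, I would record the action of right-multiplication by $P_N$: reading off $P_N$, one sees that $(A P_N)_{:,k} = A_{:,k+1}$ for $k\leq N-1$ and $(A P_N)_{:,N} = A_{:,1}$, i.e.\ right-multiplication by $P_N$ is the cyclic left-shift of columns. This matches the column permutation identified in the previous step, establishing $D_N(2\pi)F_N = F_N P_N$ and hence the lemma. There is no genuine obstacle here: the whole content of the statement is the wrap-around at $k=N$, which is handled by the periodicity relation $\exp(2\pi \textnormal i m)=1$ for $m\in\mathbb Z$.
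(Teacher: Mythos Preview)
Your proof is correct and follows essentially the same approach as the paper: both use the factorization $D_N(\theta+2\pi)=D_N(\theta)D_N(2\pi)$ to reduce to the $\theta$-free identity $D_N(2\pi)F_N=F_NP_N$, and then verify it by the entrywise computation $[D_N(2\pi)F_N]_{i,k}=N^{-1/2}\exp(\textnormal i 2\pi(i-1)k/N)=[F_NP_N]_{i,k}$. The paper simply leaves the wrap-around implicit in the modular arithmetic, whereas you spell out the $k=N$ case separately, but there is no substantive difference.
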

\begin{proof}
From a direct computation, we see that $D_N(\theta+2 \pi)=D_N(\theta)D_N(2 \pi)$. On the other hand, for any $1\leq i,j \leq N$,
\[ [D(2 \pi) F_N]_{ij} = \frac{1}{\sqrt{N}} e^{\textnormal i 2\pi\frac {i-1}N} \cdot 
e^{\textnormal i 2\pi\frac {(i-1)(j-1)}N}
 = \frac{1}{\sqrt{N}} e^{\textnormal i 2\pi\frac {(i-1)j}N} = [F_N P_N]_{ij}.
  \]
  \end{proof}
  \begin{lemma}\label{lem:pseudo_circulant}
	Let $\mu_1(\theta),\dots, \mu_N(\theta)$ be $2\pi N$-periodic analytic functions on $\f R$ such that 
	\[
	\mu_{k}(\theta) = \mu_1(\theta+ 2\pi(k-1)), \quad \forall k.
	\]
	and let $M(\theta)$ be the $N\times N$ diagonal matrix whose $(i,i)$ element, for all $i=1,\dots,N$, is equal to $\mu_i(\theta)$.
	Given $D_N(\theta)$ and $F_N$ as in Lemma \ref{lem:D_N_e_F_N}, the matrix
	\[
	C(\theta) := D_N(\theta) F_N M(\theta)F_N^* D_N(\theta)^*
	\]
	is analytic, $2\pi$-periodic and pseudo-circulant, i.e., there exist analytic $2\pi$-periodic functions $\phi_0(\theta),\dots, \phi_{N-1}(\theta)$ such that 
	\[
	C(\theta) = 
	\begin{bmatrix}
	\phi_0(\theta) &e^{-\textnormal i\theta}\phi_{N-1}(\theta) &\dots & e^{-\textnormal i\theta}\phi_{1}(\theta)\\
	\phi_1(\theta) & \phi_0(\theta)& \ddots & \vdots\\
	\vdots& \ddots & \ddots& e^{-\textnormal i\theta}\phi_{N-1}(\theta)\\
	\phi_{N-1}(\theta)&\dots &\phi_1(\theta)  &\phi_0(\theta)
	\end{bmatrix}.
	\]
 Further, if $\mu_i(\theta)$ are real-valued, then $C(\theta)$ is also Hermitian. 
\end{lemma}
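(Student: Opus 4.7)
The plan is to verify each of the four claims (analyticity, $2\pi$-periodicity, pseudo-circulant structure, Hermiticity in the real case) in turn, by a combination of direct matrix computation and exploitation of Lemma \ref{lem:D_N_e_F_N}. Analyticity of $C(\theta)$ is immediate since it is a product of analytic matrices. Hermiticity under the additional assumption that the $\mu_i$ are real is also a quick observation: $F_N$ is unitary so $F_N^* = F_N^{-1}$, $D_N(\theta)$ is unitary so $D_N(\theta)^* = D_N(\theta)^{-1}$, and $M(\theta)^* = M(\theta)$ for real $\mu_i$, so taking the conjugate transpose of the defining formula returns the same matrix.

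For $2\pi$-periodicity, I would invoke Lemma \ref{lem:D_N_e_F_N} to rewrite
\[
C(\theta+2\pi) = D_N(\theta) F_N\, P_N M(\theta+2\pi) P_N^T\, F_N^* D_N(\theta)^*,
\]
and then check the algebraic identity $P_N M(\theta+2\pi) P_N^T = M(\theta)$. This identity follows because the hypothesis $\mu_k(\theta) = \mu_1(\theta + 2\pi(k-1))$ together with the $2\pi N$-periodicity of $\mu_1$ forces $\mu_k(\theta + 2\pi) = \mu_{k+1}(\theta)$ for $k<N$ and $\mu_N(\theta+2\pi) = \mu_1(\theta)$, i.e.\ the map $\theta \mapsto \theta+2\pi$ implements on $M$ exactly the cyclic permutation encoded by $P_N$.

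The pseudo-circulant structure is the step I expect to require the most bookkeeping. I would compute $C(\theta)_{ij}$ directly from the definition, using the explicit entries of $F_N$ and $D_N(\theta)$, to obtain
\[
C(\theta)_{ij} = \frac{1}{N}\sum_{k=1}^{N} e^{\textnormal i (i-j)(\theta + 2\pi(k-1))/N}\, \mu_1(\theta + 2\pi(k-1)).
\]
For $i \ge j$, defining $\phi_{i-j}(\theta)$ to be this sum immediately gives an entry that depends only on $i-j$. For $i<j$, I would extract a factor $e^{-\textnormal i\theta}$ and use the crucial identity $e^{\textnormal i 2\pi(k-1)}=1$ to rewrite the exponential factor with exponent $(N-(j-i))$ instead of $-(j-i)$; this is the main trick that turns a negative shift into a positive shift at the price of the $e^{-\textnormal i\theta}$ prefactor, producing the claimed pseudo-circulant shape. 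Finally, $2\pi$-periodicity of each $\phi_m(\theta)$ can be checked directly from the sum formula (a reindexing $k \mapsto k-1$ combined with $2\pi N$-periodicity of $\mu_1$), giving a second, independent verification of the periodicity of $C(\theta)$ and confirming that the scalar functions appearing in the pseudo-circulant template are genuine analytic $2\pi$-periodic functions on $\mathbb{R}$.
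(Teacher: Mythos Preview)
Your proposal is correct and follows essentially the same approach as the paper: analyticity is immediate, $2\pi$-periodicity is obtained via Lemma \ref{lem:D_N_e_F_N} together with the identity $P_N M(\theta+2\pi)P_N^T = M(\theta)$, and the pseudo-circulant structure comes from the same explicit computation of $C(\theta)_{ij}$ as a function of $i-j$ combined with the observation that $e^{\textnormal i 2\pi(k-1)}=1$ produces the wrap-around relation $\phi_{q}(\theta)=e^{\textnormal i\theta}\phi_{q-N}(\theta)$. The only (inessential) difference is that you verify Hermiticity by taking the conjugate transpose of the defining product directly, whereas the paper checks it at the level of the Toeplitz symbols via $\phi_q(\theta)^*=\phi_{-q}(\theta)$; your route is slightly quicker.
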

\begin{proof}
	All matrices involved are analytic, so $C(\theta)$ is also analytic. Moreover, by Lemma \ref{lem:D_N_e_F_N}, 
	\[
	C(\theta+2\pi) = D_N(\theta) F_N
	P_N M(\theta + 2 \pi) P_N^*
	F_N^* D_N(\theta)^*.
	\]
	On the other hand, 
	\[
	P_N M(\theta + 2 \pi) P_N^*
	=
\begin{bmatrix}
\mu_N(\theta+2\pi) & & & \\
& \mu_1(\theta+2\pi) & & \\
& &  \ddots & \\
& & & \mu_{N-1}(\theta+2\pi)
\end{bmatrix}
=
	M(\theta)
	\]
	so we find $C(\theta+2\pi) = C(\theta)$. Furthermore,
	\begin{align*}
			C(\theta)_{i,j} &= 
	\sum_{k=1}^N	
		D_N(\theta)_{i,i} (F_N)_{i,k} \mu_k(\theta)
		(F_N^*)_{k,j} D_N(\theta)^*_{j,j}\\
		&= 
		\frac 1N
		\sum_{k=1}^N	
		e^{\textnormal i \theta\frac {i-1}N} 
		e^{\textnormal i 2\pi\frac {(i-1)(k-1)}N}
		 \mu_k(\theta)
		 e^{-\textnormal i 2\pi\frac {(k-1)(j-1)}N}
		 e^{-\textnormal i \theta\frac {j-1}N}\\
		 &= 
		 \frac 1N
		 e^{\textnormal i \theta\frac {(i-j)}N} 
		 \sum_{k=1}^N	
		 		 \mu_k(\theta)
		 e^{\textnormal i 2\pi\frac {(i-j)(k-1)}N}
	\end{align*} 
is a function of $i-j$. Thus, $C(\theta)$ is a Toeplitz matrix and we can define $C(\theta)_{j+q,j}=:\phi_q(\theta)$ where $q:=i-j$ indexes the diagonals of $C(\theta)$. Showing that $C(\theta)$ is pseudo-circulant is tantamount to proving that $e^{-\textnormal i\theta}\phi_q(\theta)=\phi_{q-N}(\theta)$ for all $0<q \leq N-1$. To this goal, note that for $q>0$ it holds
 	\begin{align*}
 \phi_{q}(\theta)
 &= 
 \frac 1N
 e^{\textnormal i \theta\frac {q}N} 
 \sum_{k=1}^N	
 \mu_k(\theta)
 e^{\textnormal i 2\pi\frac {q(k-1)}N}\\
  &= 
 \frac 1N
 e^{\textnormal i \theta}
 e^{\textnormal i \theta\frac {q-N}N} 
 \sum_{k=1}^N	
 \mu_k(\theta)
 e^{\textnormal i 2\pi\frac {(q-N)(k-1)}N}\\
 &=  e^{\textnormal i \theta}\phi_{q-N}(\theta).
 \end{align*} 
 Finally, it is immediate to check that $\phi_q(\theta)^* = \phi_{-q}(\theta)$ when $\mu_i(\theta)$ are all real, so  in this case $C(\theta)$ is also Hermitian. 
\end{proof}

Now, we can use Lemma \ref{lem:D_N_e_F_N} and Lemma \ref{lem:pseudo_circulant} to show Theorem \ref{theo:pseud-circulant_dec} below.

\begin{theorem}\label{theo:pseud-circulant_dec}
	Let $A(z) \in \HC^{n\times n}$ be a para-Hermitian matrix.
	There exists a decomposition $A(z) = U(z)D(z)U(z)^P$ in $\HC$ where $U(z)$ is para-unitary and $D(z)$ is block diagonal with pseudo-circulant blocks.
\end{theorem}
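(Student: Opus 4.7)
The plan is to refine the eigendecomposition provided by Proposition \ref{lem:periodicity_coincide} orbit-by-orbit, and then use the pseudo-circulant construction of Lemma \ref{lem:pseudo_circulant} to absorb the non-holomorphic part of each orbit into a pseudo-circulant block lying in $\HC$. By Proposition \ref{lem:periodicity_coincide}, there exists a unitary eigendecomposition $A(z) = \wt U(z) \wt D(z) \wt U(z)^P$ in $\PC^{n \times n}$ in which each eigenvector column lies in the same $\mathcal H_{\alpha_i}(S^1)$ as its associated eigenvalue $\mu_i(z)$. Passing to angular variables via $A(\theta) := A(e^{\textnormal i\theta})$, Lemma \ref{lem:permutation} and the discussion before the statement partition the indices into orbits $C_1, \dots, C_k$ of lengths $\alpha_1, \dots, \alpha_k$, and upon relabelling within each orbit we have $\mu_{\ell,j}(\theta) = \mu_{\ell,1}(\theta + 2\pi(j-1))$ for all $\ell = 1, \dots, k$ and $j = 1, \dots, \alpha_\ell$.

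The key step is to produce an analytic family of orthonormal eigenvectors satisfying an analogous cyclic-shift relation. Fix an orbit $C_\ell$ and pick the first eigenvector $v_1(\theta) \in \mathcal H_{\alpha_\ell}(S^1)$ from Proposition \ref{lem:periodicity_coincide}; for $j = 2, \dots, \alpha_\ell$, set $v_j(\theta) := v_1(\theta + 2\pi(j-1))$. Then $v_j$ is a unit eigenvector of $A(\theta) = A(\theta + 2\pi(j-1))$ for the eigenvalue $\mu_{\ell,1}(\theta + 2\pi(j-1)) = \mu_{\ell,j}(\theta)$. Because the distinct eigenvalue functions are pairwise different as analytic functions of $\theta$, at generic $\theta$ they take pairwise distinct values, so the $v_j$'s are automatically orthogonal to each other (and to eigenvectors in other orbits) at such points; by the analytic identity theorem the orthogonality extends to all $\theta$. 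By construction, $v_j(\theta + 2\pi) = v_{j+1}(\theta)$ cyclically (with $v_{\alpha_\ell + 1} := v_1$, using that $v_1$ is $2\pi\alpha_\ell$-periodic). Setting $V_\ell(\theta) := [v_1(\theta)\,\cdots\, v_{\alpha_\ell}(\theta)]$, this shift translates to the matrix identity $V_\ell(\theta + 2\pi) = V_\ell(\theta) P_{\alpha_\ell}$, with $P_{\alpha_\ell}$ as in Lemma \ref{lem:D_N_e_F_N}.

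With the cyclic-shift relation secured, let $M_\ell(\theta) := \mathrm{diag}(\mu_{\ell,1}(\theta), \dots, \mu_{\ell,\alpha_\ell}(\theta))$ and define
$$C_\ell(\theta) := D_{\alpha_\ell}(\theta) F_{\alpha_\ell} M_\ell(\theta) F_{\alpha_\ell}^* D_{\alpha_\ell}(\theta)^*, \qquad W_\ell(\theta) := V_\ell(\theta) F_{\alpha_\ell}^* D_{\alpha_\ell}(\theta)^*.$$
By Lemma \ref{lem:pseudo_circulant}, $C_\ell$ is analytic, $2\pi$-periodic, Hermitian, and pseudo-circulant, hence corresponds to a pseudo-circulant element of $\HC^{\alpha_\ell \times \alpha_\ell}$. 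Combining $V_\ell(\theta + 2\pi) = V_\ell(\theta) P_{\alpha_\ell}$ with the conjugate-transposed identity $F_{\alpha_\ell}^* D_{\alpha_\ell}(\theta + 2\pi)^* = P_{\alpha_\ell}^{-1} F_{\alpha_\ell}^* D_{\alpha_\ell}(\theta)^*$ of Lemma \ref{lem:D_N_e_F_N}, the two occurrences of $P_{\alpha_\ell}$ cancel to give $W_\ell(\theta + 2\pi) = W_\ell(\theta)$, so $W_\ell \in \HC^{n \times \alpha_\ell}$ with orthonormal columns (since $F_{\alpha_\ell}^* D_{\alpha_\ell}^*$ is unitary). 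A direct computation verifies $V_\ell M_\ell V_\ell^* = W_\ell C_\ell W_\ell^*$. Concatenating across orbits, $U(z) := [W_1(z) \,|\, \cdots \,|\, W_k(z)]$ is para-unitary in $\HCM n$ and $D(z) := \mathrm{diag}(C_1(z), \dots, C_k(z))$ is block diagonal in $\HCM n$ with pseudo-circulant blocks, giving $A(z) = U(z) D(z) U(z)^P$. The main obstacle is the construction in the middle paragraph: once the cyclic-shift property of the eigenvectors is established, the rest reduces to direct calculation together with Lemmas \ref{lem:D_N_e_F_N} and \ref{lem:pseudo_circulant}.
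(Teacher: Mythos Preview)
Your approach is essentially the same as the paper's: start from the eigendecomposition of Proposition~\ref{lem:periodicity_coincide}, use the orbit structure from Lemma~\ref{lem:permutation}, replace the eigenvectors within each orbit by cyclic shifts of a single representative, and then conjugate by $F_{\alpha_\ell}^* D_{\alpha_\ell}(\theta)^*$ to render both the block $C_\ell(\theta)$ and the columns $W_\ell(\theta)$ genuinely $2\pi$-periodic via Lemmas~\ref{lem:D_N_e_F_N} and~\ref{lem:pseudo_circulant}. The periodicity computations for $W_\ell$ and $C_\ell$ are carried out correctly.

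The one place where your write-up is thinner than the paper's is the orthogonality argument in the middle paragraph when an eigenvalue function has multiplicity $q>1$, i.e., when two distinct orbits $C_\ell$ and $C_{\ell'}$ carry the \emph{same} eigenvalue functions $\mu_{\ell,j}=\mu_{\ell',j}$. Your sentence ``because the distinct eigenvalue functions are pairwise different\ldots'' does not cover the inner products $\langle v_j(\theta), v_j'(\theta)\rangle$ with $v_j(\theta)=v_1(\theta+2\pi(j-1))$ and $v_j'(\theta)=v_1'(\theta+2\pi(j-1))$, since here both vectors lie in the same eigenspace. The paper closes this by an eigenprojection identity showing that the shifted family $V_1(\theta+2\pi(j-1))$ can replace $V_j(\theta)$ wholesale; in your setup the missing step is simply that $v_1\perp v_1'$ everywhere (they are distinct columns of the para-unitary $\wt U$ from Proposition~\ref{lem:periodicity_coincide}), hence the same holds after a common shift, while the mixed case $j\neq j'$ falls back on distinct eigenvalues. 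Once you add that one line, your argument is complete and coincides with the paper's.
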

\begin{proof}
	From Lemma \ref{lem:periodicity_coincide}, we know that there exists a unitary eigendecomposition $A(z) = V(z)\Sigma(z) V(z)^P$ over  $\mathcal P(S^1)$ having the property that the periods of the eigenvalues $\mu_i(z):=e_i^T \Sigma(z) e_i$ and of the respective eigenvectors $v_i(z):=V(z)e_i$ are the same. Moreover, from Lemma \ref{lem:permutation} in the case $N=1$, we know that there exists a permutation $\sigma$ such that $\mu_i(\theta+2\pi) = \mu_{\sigma(i)}(\theta)$ where for all $j$ we set $\mu_j(\theta):=\mu_j(\exp(\textnormal i \theta))$. Note that if $q$ is the multiplicity of the eigenvalue $\mu_i(z)$, then also $\mu_j(z)$ must have multiplicity $q$ for all $j$ in the orbit of $i$. Suppose without loss of generality that $O:=\{\mu_1(z), \dots, \mu_M(z) \}$ is an arbitrary orbit of $\sigma$ with $\mu_i(\theta) = \mu_{1}(\theta + (i-1)2\pi)$ for $1\le i\le M$. Necessarily, for all $i$ such that $\mu_i(z)\in O$, the function $\mu_i(\theta)$ must have period $2\pi M$ and represent an eigenvalue of  multiplicity $q$. Call now $V_i(z)$ the submatrix of $V(z)$ whose columns are $q$ orthonormal eigenvectors associated with the eigenvalue $\mu_i(z)$. By construction, $V_i(z) \in \mathcal H_M(S^1)^{n\times q}$ for $1\le i\le M$, and we can define again $V_i(\theta):=V_i(\exp(i \theta))$ which is, for all $i$, an analytic and $2 \pi M$-periodic function. Take now $\theta_0\in \f R$ with the property that any two distinct eigenvalues are different when evaluated at $\theta=\theta_0$. In particular this implies that the eigenspaces $\mc E_i(\theta_0)$ associated with all distinct $\mu_i(\theta_0)$ are pairwise orthogonal and generated by the columns of $V_i(\theta_0)$. Moreover, since $A(\exp(\textnormal i \theta))$ is $2\pi$-periodic, also at $\theta=\theta_0 +2\pi k$ it must have pairwise orthogonal eigenspaces associated with distinct eigenvalues, and this must hold for all $k\in \f Z$. For $1\le i\le M$, we have $\mu_i(\theta_0) = \mu_{1}(\theta_0 + (i-1)2\pi)$, so $\mc E_i(\theta_0) = \mc E_1(\theta_0 + (i-1)2\pi)$ and the respective eigenprojections are also equal. Thus, $V_i(\theta_0)\mu_i(\theta_0)V_i(\theta_0)^* = V_1(\theta_0 + (i-1)2\pi)\mu_{1}(\theta_0 + (i-1)2\pi)V_1(\theta_0 + (i-1)2\pi)^*$. Since the set of possible values of $\theta_0$ for which this argument is valid is a dense open subset of $\f R$ \cite[Theorem 10.18]{Rudin}, and every function involved is analytic (in $\theta$), by the identity theorem \cite[Corollary of Theorem 10.18]{Rudin}  we get the functional identity
	\[
	V_i(\theta)\mu_i(\theta)V_i(\theta)^* = V_1(\theta + (i-1)2\pi)\mu_{1}(\theta + (i-1)2\pi)V_1(\theta + (i-1)2\pi)^*,\quad  i=1,\dots, M.
	\]
	This shows that we can replace in $V(z)$ the columns corresponding to $V_i(\theta)$ with the columns corresponding to $V_1(\theta + (i-1)2\pi)$, without losing the validity of the unitary eigendecomposition $A(z) = V(z)\Sigma(z)V(z)$ since
	\[
	A(z) = 
	\sum_{i:\mu_i(z)\in O} V_i(\theta)\mu_i(\theta)V_i(\theta)^*
	+
	\sum_{i:\mu_i(z)\not\in O} V_i(\theta)\mu_i(\theta)V_i(\theta)^*.
	\]
If now $v_i(\theta)$ is the leftmost column of $V_i(\theta)$, and thus an eigenvector relative to $\mu_i(\theta)$, we have $v_i(\theta) = v_1(\theta + (i-1)2\pi)$ for $1\le i\le M$ and these vectors are all $2\pi M$-periodic. Call $\wt V(\theta)$ the matrix whose columns are $v_1(\theta),\dots,v_M(\theta)$ and $\wt D(\theta)$ the $M \times M$ diagonal matrix whose $(i,i)$th element is $\mu_i(\theta)$. From Lemma \ref{lem:pseudo_circulant} we obtain that
\[
\wt V(\theta)\wt D(\theta)\wt  V(\theta)^* = 
\wt V(\theta)F_M^* D_M(\theta)^* C(\theta)  D_M(\theta) F_M \wt V(\theta)^* =: W(\theta) C(\theta) W(\theta)^*
\]
where  $W(\theta) = \wt V(\theta)F_M^* D_M(\theta)^*$. On the other hand, Lemma \ref{lem:D_N_e_F_N} implies that 
\begin{align*}
	W(\theta + 2\pi) &= \wt V(\theta+2\pi)P_M^*F_M^* D_M(\theta)^* \\
	&=
	\begin{bmatrix}
	v_2(\theta) & \dots & v_M(\theta) & v_1(\theta)
	\end{bmatrix}P_M^*
	F_M^* D_M(\theta)^* \\
	& =
	\wt V(\theta)F_M^* D_M(\theta)^*
	=
	W(\theta)
\end{align*}
so that all matrices in $W(\theta) C(\theta) W(\theta)^*$ are $2\pi$-periodic, $C(\theta)$ is pseudo-circulant and $W(\theta)$ has still orthonormal columns. This is enough to conclude that we can substitute the eigenvalues in $O$ (without multiplicity) and their associated eigenvectors with $C(\theta)$ and $W(\theta)$. Repeating the reasoning for all orbits, we obtain an orthonormal basis as the union of the columns of all $W(\theta)$  and a block-diagonal matrix with pseudo-circulant block entries $C(\theta)$, and every entry is $2\pi$-periodic.
\end{proof}

\begin{remark}
	The permutation $\sigma$ that realizes Lemma \ref{lem:permutation} may not be unique when there are eigenvalues with multiplicity greater than one, but from Lemma \ref{lem:permutation} we know that there exists one where the orbit lengths match the periods of the eigenvalues. This permutation allows us to write a decomposition $A(z) = U(z)D(z)U(z)^P$ with the smallest possible pseudo-circulant diagonal blocks. Notice that when all the eigenvalues have period $2\pi$, this decomposition coincides with the EVD of Theorem \ref{theo:analytic_eigvl_imply_analytic_eigvc}.
\end{remark}

\section{Application to the singular value decomposition of matrices over $\PC$}\label{sec:svd}
If $A(z) \in \PC^{m \times n}$ is not necessarily para-Hermitian (and generally not even necessarily square), it is of interest to analyze its functional singular value decomposition. We argue that the latter exists provided that one accepts two relaxation with respect to the usual requirements of a singular value decomposition: the singular values must be allowed to be possibly negative, and are not necessarily ordered.

Theorem \ref{thm:svdinPS1} below makes this claim more precise. Note that it is an extension of related results in \cite{Weissnew} for matrices in $\HC^{m \times n}$; our proof, however, is different and inspired by the ideas in \cite{BunseMehrmann}.
\begin{theorem}\label{thm:svdinPS1}
Let $A(z) \in \PC^{m\times n}$. There exists a singular value decomposition $A(z) = U(z)S(z)V(z)^P$, where $S(z) \in \PC^{m\times n}$ is diagonal and real for all $z \in S^1$ and $U(z) \in \PC^{m\times m}, V(z) \in \PC^{n \times n}$ are both para-unitary.
\end{theorem}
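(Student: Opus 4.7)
My plan is to reduce the SVD to a para-Hermitian eigendecomposition via the classical Jordan--Wielandt embedding (cf.\ \cite{BunseMehrmann}). Define
\[
H(z) := \begin{bmatrix} 0 & A(z) \\ A(z)^P & 0 \end{bmatrix} \in \PC^{(m+n) \times (m+n)},
\]
which is para-Hermitian by construction, and apply Theorem \ref{theo:EVD_in_not_puiseux} to obtain an eigendecomposition $H(z) = Q(z)\Lambda(z)Q(z)^P$ in $\PC$ with $Q$ para-unitary and $\Lambda$ diagonal and real on $S^1$. The extra structure to exploit is the sign symmetry carried by the constant involution $J := \diag(-I_m, I_n)$: since $J$ is trivially para-unitary and $JH(z)J = -H(z)$, the diagonal entries of $\Lambda$ (viewed as analytic functions in $\PC$) come in signed pairs $\pm\sigma_i(z)$, and for each eigenvector $q$ of $H(z)$ with eigenvalue $\sigma$, the vector $Jq$ is an eigenvector with eigenvalue $-\sigma$.

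Next, I would exploit block structure. Splitting each column $q_i$ of $Q$ conformally with $H$ into its first $m$ rows $a_i \in \PC^m$ and last $n$ rows $b_i \in \PC^n$, the equation $H q_i = \sigma_i q_i$ gives $A b_i = \sigma_i a_i$ and $A^P a_i = \sigma_i b_i$, hence $\sigma_i(\|a_i\|^2 - \|b_i\|^2) = 0$ on $S^1$. Whenever $\sigma_i \not\equiv 0$, this forces $\|a_i\| = \|b_i\| = 1/\sqrt{2}$ on the dense open subset of $S^1$ where $\sigma_i \ne 0$, hence everywhere on $S^1$ by analyticity. Setting $u_i := \sqrt{2}\,a_i$ and $v_i := \sqrt{2}\,b_i$, so that the top and bottom halves of $q_i$ are $u_i/\sqrt{2}$ and $v_i/\sqrt{2}$, and taking $Jq_i$ (whose halves are $-u_i/\sqrt{2}$ and $v_i/\sqrt{2}$) as the matching eigenvector for $-\sigma_i$, one checks by adding and subtracting the orthogonality relations $q_i^P q_j = 0$ and $q_i^P (Jq_j) = 0$ that the families $\{u_i\}\subset \PC^m$ and $\{v_i\}\subset \PC^n$ are each orthonormal. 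The remaining columns of $Q$, corresponding to the identically-zero eigenvalue, span the analytic kernel of $H(z)$, which coincides with the blockwise embedding of $\ker A(z)^P \oplus \ker A(z)$; since $J$ restricts to a constant $\pm 1$ involution on $\ker H(z)$, analytically diagonalizing $J$ on this bundle via the Smith-form and Gram--Schmidt toolkit of Lemma \ref{lem:GS} yields orthonormal bases $U_2 \in \PC^{m \times (m-r)}$ of $\ker A^P$ and $V_2 \in \PC^{n \times (n-r)}$ of $\ker A$, where $r$ is the number of matched pairs. Assembling
\[
U := [\,u_1 \mid \cdots \mid u_r \mid U_2\,], \quad V := [\,v_1 \mid \cdots \mid v_r \mid V_2\,], \quad S := \begin{bmatrix} \diag(\sigma_1, \ldots, \sigma_r) & 0 \\ 0 & 0 \end{bmatrix},
\]
a direct block computation gives $AV = US$ and therefore $A = USV^P$ with $U$, $V$ para-unitary.

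The main obstacle will be the pairing step: Theorem \ref{theo:EVD_in_not_puiseux} supplies an arbitrary analytic eigendecomposition of $H$ in $\PC$, and although the $J$-symmetry forces the multiset of analytic eigenvalue functions to be invariant under $\sigma \mapsto -\sigma$, it does not automatically produce columns of $Q$ already arranged into $J$-pairs. One must argue that, within each analytic eigenspace of constant multiplicity, one may re-choose the orthonormal analytic basis in $\PC$ so that the required pairing, and the $J$-splitting of the kernel into the two embedded summands $\ker A^P$ and $\ker A$, is realized. This is of the same flavor as the re-basing-within-an-eigenspace argument carried out in Proposition \ref{lem:periodicity_coincide}, and I expect it to go through by the same combination of analytic diagonalization and Smith-form arguments already developed in Section \ref{sec:evd}.
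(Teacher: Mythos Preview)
Your approach coincides with the paper's in its core: both form the Jordan--Wielandt matrix $H(z)$, apply Theorem \ref{theo:EVD_in_not_puiseux}, and exploit the $\pm\sigma$ pairing of the nonzero eigenvalues to read off the $u_i=\sqrt{2}\,a_i$ and $v_i=\sqrt{2}\,b_i$; the paper is equally brief on the pairing step you flag, simply asserting that for each eigenvector $[b;c]$ of $H$ with eigenvalue $\lambda\not\equiv 0$ one may take $[b;-c]$ as the partner eigenvector for $-\lambda$, and the orthonormality checks you sketch are exactly what justifies this. The one genuine difference is in completing $[u_1\,|\,\cdots\,|\,u_r]$ and $[v_1\,|\,\cdots\,|\,v_r]$ to full para-unitaries: rather than diagonalizing $J$ on $\ker H$ as you propose, the paper observes that $\sqrt{2}B(z)$ is left-invertible, extends it to a square unimodular matrix via the Smith form (\cite[Theorem 3.3]{zaballa}), and applies Gram--Schmidt (Lemma \ref{lem:GS}). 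This bypasses the kernel analysis entirely and is a bit more direct; your route is also correct, since $J$ restricted to $\ker H$ has constant eigenvalues $\pm 1$ and hence admits an analytic diagonalization by Theorem \ref{theo:analytic_eigvl_imply_analytic_eigvc} applied in the relevant $\mathcal H_N(S^1)$.
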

\begin{proof}
Consider the para-Hermitian matrix
\[ H(z) = \begin{bmatrix}
0 & A(z)\\
A(z)^P & 0
\end{bmatrix} \in \PC^{(m+n) \times (m+n)}; \]
by Theorem \ref{theo:EVD_in_not_puiseux}, there is a unitary eigendecomposition over $\PC$ of the form $H(z)=Q(z) D(z) Q(z)^P$. Moreover, it can be verified that $\lambda(z)$ is a nonzero eigenvalue of the matrix $H(z)$ with eigenvector $\begin{bmatrix}
b(z)\\
c(z)
\end{bmatrix}$ and $b(z)\in\mathcal P(S^1)^m$, $c(z)\in\mathcal P(S^1)^n$  if and only if $-\lambda(z)$ is a nonzero eigenvalue of $H(z)$ with eigenvector $\begin{bmatrix}
b(z)\\
-c(z)
\end{bmatrix}$. In particular, by taking these eigenvectors to be orthonormal, we can pick selected columns of $Q(z)$ and selected rows and columns of $D(z)$ and write the equation
\[ H(z)=\begin{bmatrix}
B(z) & B(z)\\
C(z) & -C(z)
\end{bmatrix} \begin{bmatrix}
\Lambda(z) & 0\\
0 & -\Lambda(z)
\end{bmatrix}\begin{bmatrix}
B(z) & B(z)\\
C(z) & -C(z)
\end{bmatrix}^P,\]
where $\begin{bmatrix}
B(z) & B(z)\\
C(z) & -C(z)
\end{bmatrix} \in \PC^{(m+n) \times 2r}$ has orthonormal columns and $\Lambda(z) \in \PC^{r \times r}$ is nonsingular. The previous equation implies in turn that $A(z)C(z)=B(z)\Lambda(z)$ and that $A(z)^P B(z)=C(z) \Lambda (z)$. Hence, noting that $\sqrt{2} B(z)$ and $\sqrt{2} C(z)$ also have orthonormal columns by construction, we have that $A(z)=[\sqrt{2} B(z)] \Lambda(z) [\sqrt{2} C(z)]^P$ is a ``compact" singular value decomposition. To complete the proof, note that $\sqrt{2} B(z)$ can be completed to a square invertible matrix $M(z)$: this follows by observing that $\mathcal H_N(S^1)$ is a PID for any $N$ and again by \cite[Theorem 3.3]{zaballa}. We can then apply Lemma \ref{lem:GS} to obtain a para-unitary matrix $U(z)$ whose leftmost columns coincide with those of $\sqrt{2} B(z)$. A similar procedure can be employed to construct $V(z)$ from $\sqrt{2} C(z)$; finally, we can define 
\[ S(z) = \begin{bmatrix}
\Lambda(z) & 0_{r \times (n-r)}\\
0_{(m-r) \times r} & 0_{(m-r) \times (n-r)}
\end{bmatrix}\]
and find that $A(z) = U(z)S(z)V(z)^P$.
\end{proof}

\begin{example}
Consider the matrix \[ A(z)=\begin{bmatrix}
1 & z
\end{bmatrix} \in \HC^{1 \times 2} \Rightarrow A(z)^P = \begin{bmatrix}
1\\
z^{-1}
\end{bmatrix} \in \HC^{2 \times 1}.\]
Proceeding as in the proof of Theorem \ref{thm:svdinPS1} we find 
\[  H(z) = \begin{bmatrix}
0 & A(z)\\
A(z)^P & 0
\end{bmatrix} = \begin{bmatrix}
z/\sqrt{2} & z/\sqrt{2}\\
z/2 & -z/2\\
1/2 & -1/2
\end{bmatrix}\begin{bmatrix}
\sqrt{2} & 0\\
0 & -\sqrt{2}
\end{bmatrix}\begin{bmatrix}
z/\sqrt{2} & z/\sqrt{2}\\
z/2 & -z/2\\
1/2 & -1/2
\end{bmatrix}^P \]
and thus $B(z)=z$, $C(z) = \frac{1}{\sqrt{2}} \begin{bmatrix}
z\\
1
\end{bmatrix} \Rightarrow V(z) = \frac{1}{\sqrt{2}}\begin{bmatrix}
z & -z\\
1 & 1
\end{bmatrix}$. Hence,
\[ A(z)=\underbrace{z}_{U(z)} \cdot \underbrace{\begin{bmatrix}
\sqrt{2} & 0
\end{bmatrix}}_{S(z)} \cdot \underbrace{\left( \frac{1}{\sqrt{2}}\begin{bmatrix}
z^{-1} & 1\\
-z^{-1} & 1
\end{bmatrix}\right)}_{V(z)^P}   \]
is the sought singular value decomposition, which in this particular case exists over $\HC$.\end{example}
\begin{example}
For an example of a matrix over $\HC$ whose singular value decomposition only exists over $\PC$, but not over $\HC$, consider the scalar $A(z)=1+z \in \HC$. Since $|1+z| \not \in \HC$ there is no hope to decompose $A(z)$ with an $S^1$-analytic singular value decomposition. However, 
\[ A(z) = \underbrace{z^{1/2}}_{U(z)} \cdot  \underbrace{(z^{1/2}+z^{-1/2})}_{S(z)} \cdot \underbrace{1}_{V(z)^P} \]
is a singular value decomposition over $\mathcal{H}_2(S^1) \subset \PC$.
\end{example}

\section{Application to the sign characteristics of $*$-palindromic matrix polynomials}\label{sec:signchar}

\subsection{Sign characteristics of Hermitian matrix-valued functions}
Let $H(z)$ be a matrix-valued function which is analytic for all $z$ in an open domain $\Omega \subseteq \C$. In numerical linear algebra, the \emph{finite eigenvalues} of such a matrix-valued function $H(z)$ are defined \cite[Section 2]{mntx} as the numbers $z_0 \in \Omega$ such that $\mathrm{rank} H(z_0) < \sup_{z \in \Omega} \mathrm{rank} H(z)$. In particular, if $H(z)$ is square and regular, i.e., if its determinant $\det H(z)$ is not the zero function, the finite eigenvalues are the roots of $\det H(z)$. Since the set of  analytic functions on a domain is, from the algebraic perspective, an EDD \cite{EDD}, one defines the \emph{partial multiplicities} of a finite eigenvalue $z_0$ of $H(z)$ as the multiplicities of $z_0$ as a root of the not identically zero diagonal elements in the Smith canonical form of $H(z)$. In particular, if the only nonzero partial multiplicity is equal to $1$, a finite eigenvalue is said to be \emph{simple}. 

Let us now further assume that $\Omega$ contains the real line and that $H(z)$ is a Hermitian matrix-valued function, that is, $H(x)=H(x)^*$ for all $x \in \f R$. (Note that this implies that Theorem \ref{thm:rellich} applies to $H(z)$.) Observe that the finite eigenvalues of $H(z)$, defined above, are different than the eigenvalues of the matrix $H(z)$ in the functional sense of Theorem \ref{thm:rellich} and of the previous Sections in this paper -- in particular, the finite eigenvalues of $H(z)$ are complex numbers while the eigenvalues of the matrix $H(z)$ are real-analytic functions of the variable $z$. However, it is clear by Theorem \ref{thm:rellich} itself that there is a connection: since a matrix which is unitary on the real line must be regular, the finite eigenvalues must be precisely the roots of the nonzero eigenvalue functions.
Now suppose that $H(z)=U(z)D(z)U(z)^*$ is a Rellich unitary eigendecomposition as in Theorem \ref{thm:rellich}, and let $\lambda \in \mathbb{R}$ be a real root of a not identically zero diagonal element $D_{i,i}(z)$ of $D(z)$, i.e., a real finite eigenvalue of $H(z)$. Following \cite[Definition 2.3]{mntx}, if $D_{i,i}(z)=\epsilon_i c_i (z-\lambda)^{m_i} + O(z-\lambda)^{m_i+1}$ where $\epsilon_i \in \{-1,1\}$ and $c_i > 0$, then one says that $m_i$ is the $i$-th partial multiplicity of the finite eigenvalue $\lambda$ (and it can be proved that this definition agrees with the one given above based on the Smith canonical form), and $\epsilon_i$ is its $i$-th sign characteristics. Moreover, in this case, the $i$-th sign feature of $\lambda$ is defined as $\epsilon_i (1-(-1)^{m_i})/2$ (hence, a sign feature can be $-1$, $0$, or $+1$). Furthermore, if $H(z)$ is polynomial, one can also define partial multiplicities, sign characteristics and sign features associated with $\infty$, see \cite[Definition 2.8]{mntx}. It is discussed in \cite{mntx} that the sign characteristics and features are important for the perturbation analysis of the real eigenvalues; in particular a pair of nearby real eigenvalues can be removed by the real line by a small perturbation only if the sum of their sign features is $0$ \cite[Theorem 5.13]{mntx}. There are also alternative, equivalent, algebraic definitions of the sign characteristics of a Hermitian matrix polynomial that can be shown to be equivalent to the analytic definition mentioned above: see \cite{glr,lr,mntx} and the references therein. Here, it suffices to say that if $\lambda$ is a simple real eigenvalue of a regular Hermitian matrix polynomial $P(z)$, associated with right (and left) eigenvector $v$, then the sign characteristic of $\lambda$ is $v^* P'(\lambda) v$ where $P'(z)$ denotes the derivative of $P(z)$ with respect to $z$.

\subsection{Sign characteristics of $*$-palindromic matrix polynomials}

Given some matrices $P_i \in \C^{n \times n}$, $i=0,\dots,g$, let $P(z)=\sum_{i=0}^g P_i z^i$ be a matrix polynomial. Here $g$ is an integer, called the \emph{grade of palindromicity}, greater than or equal to the degree of $P(z)$. In other words, $g$ is bounded below by the degree of the matrix polynomial $P(z)$, but we admit the possibility that $P_g=0$ and hence $\deg P(z) < g$. 

We say that $P(z)$ is $*$-palindromic \cite{m4}  if $P_i=P_{g-i}^*$ for all $0 \leq i \leq g$. When $g$ is even, it is not difficult to verify that $P(z)$ is $*$-palindromic of degree $g$ if and only if $R(z):=z^{-g/2} P(z) \in \HC^{n \times n}$ is a para-Hermitian matrix whose elements are functions analytic on the unit circle $S^1$. When $g$ is odd, then $z^{-g/2}$ has a branch point at $z=-1$ and thus $R(z)$ is not holomorphic on the whole $S^1$; however, in this case $R(z) \in \mathcal{H}_2(S^1)^{n \times n} \subset \PC^{n \times n}$, so that Theorem \ref{theo:EVD_in_not_puiseux} still applies. Moreover, $R(z)$ retains the property of being Hermitian on the unit circle and it is analytic on $S^1 \setminus \{-1\}$. In addition, it is clear that $z \in S^1 \setminus \{-1\}$ is a finite eigenvalue of $P(z)$ if and only if it is a finite eigenvalue of $R(z)$. The analysis of this paper suggests one analytic definition of the sign characteristics (and features) of a $*$-palindromic matrix polynomial; it is expected that these objects also play a crucial role in the perturbation theory for this class of matrix polynomials, analogously to what happens for the Hermitian case.  Indeed, this can be proved using the result for Hermitian matrices and the definition that we will give for the sign characteristic of $*$-palindromic matrix polynomials, and we will illustrate the relevance of the result for structured perturbation theory with some examples. 

Let us first start from the case of even  $g$. We know that for some $N$ the Rellich eigenvalue functions of $R(z)$ are analytic functions of $w=z^{1/N}$. In other words, they are expressible as analytic and $2 \pi N$-periodic functions $F_i(\theta)$, where $z=\exp(\textnormal i\theta)$ (see Subsection \ref{sec:Laurent}, Theorem \ref{theo:EVD_in_not_puiseux}, and related remarks); these functions are in particular analytic in $]-\pi,\pi[$. Hence, if $-1 \neq \lambda \in S^1$ is a unimodular eigenvalue of $P(z)$, we may consider the Taylor expansions around $\theta_0$ of those nonzero eigenvalue functions $F_i(\theta)$ that have a zero at $\theta_0$, with $\lambda=\exp(i \theta_0)$, say,
\[ F_i(\theta)=\epsilon_i c_i (\theta-\theta_0)^{m_i} + O(\theta-\theta_0)^{m_i+1},\]
where $\epsilon_i$ is a sign and $c_i > 0$. Analogously to the Hermitian case, we can then define $\epsilon_i$ to be the $i$-th sign characteristic, $m_i$ to be the $i$-th partial multiplicity, and the $i$-th sign feature can be defined accordingly, in the same way as in the case of Hermitian matrix polynomials.
\begin{example}
Consider $P(z)=\begin{bmatrix}
2z&1+z\\
z^2+z&2z
\end{bmatrix}=z R(z)$ where $R(z)$ is the para-Hermitian matrix introduced in \cite{EVD2} . The finite eigenvalues of $P(z)$ are the roots of $\det P(z)=-z^3+2z^2-z$. Hence, $z_0=1$ is a unimodular eigenvalue of algebraic multiplicity $2$. The eigenvalue functions of $R(z)$ are $F_1(\theta)=2+2\cos(\theta/2)$ and $F_2(\theta)=2-2\cos(\theta/2)$. Only the latter has a root at $\theta_0=0$, corresponding to $z_0=\exp(\textnormal i \theta_0)=1$. We have the Taylor expansion
\[ F_2(\theta)= (+1) \frac{1}{4} \theta^2 + o(\theta^2), \]
and hence the eigenvalue $1$ of $P(z)$ has its  only nonzero partial multiplicity equal to $2$, sign characteristic $+1$, and sign feature $0$.
\end{example}

Note that the $F_i(\theta)$ are the eigenvalues of the Hermitian matrix function \begin{equation}\label{eq:H}
    H(\theta):=R(\exp (\textnormal i \theta))=\exp (-\frac{\textnormal i g \theta}{2})P(\exp (\textnormal i \theta)).
\end{equation}  Suppose that $P(z)$, and hence $H(\theta)$, is regular. Observe that $z_0=\exp(\textnormal i \theta_0) \in S^1 \setminus \{-1\}$ is a simple unimodular eigenvalue of $P(z)$ if and only if $\theta_0 \in ]-\pi,\pi[$ is a simple real eigenvalue of $H(\theta)$. Let $v$ be a corresponding right (and left) eigenvector; then, taking into account that $P(e^{i \theta_0})v=0$, in this case the sign characteristic can also be computed as the sign of
\begin{equation}\label{eq:signformula}
 \left[ v^* \frac{d H(\theta)}{d \theta} v\right]_{\theta=\theta_0} = \textnormal i \frac{z_0}{z_0^{g/2}} \left[ v^* \frac{d P(z)}{d z}  v\right]_{z=e^{\textnormal i \theta_0}}.  
\end{equation} 
We thus recover the characterization of \cite[Proposition 12.6.1]{glrind}

For the case of odd grade of palindromicity $g$, $R(z)$ as defined above is generally not analytic on the whole $S^1$. Nevertheless, $H(\theta)$ as in \eqref{eq:H} is still analytic and $4 \pi$-periodic (see Subsection \ref{sec:Laurent}); thus, its eigenvalues are still analytic on $]-\pi,\pi[$ by Rellich's Theorem \ref{thm:rellich},  and there they coincide with the eigenvalues of $R(z)$. Hence, the approach proposed above is still sensible; the choice of the branch of $z^{g/2}$ in \eqref{eq:signformula} should be coherent with that in the equation $P(z)=z^{g/2} R(z)$ when constructing $H(\theta)$. We can thus extend the above definition and results to odd grade polynomials; note that in \cite{glrind} only the regular and even grade (in fact also even degree, as the leading coefficient was taken to be invertible) case was treated.

For ease of reference, we formally collect the outcome of the previous analysis in the following Definition \ref{def:sign}, in which the above mentioned care about the uniformity of the choice of a branch of the square root function must be taken.

\begin{definition}\label{def:sign}
Let $P(z)$ be an $n \times n$ $*$-palindromic matrix polynomial with grade of palindromicity $g$. Suppose that $-1 \ne \lambda \in S^1$ is a unimodular finite eigenvalue of $P(z)$. Define $R(z):=z^{-g/2}P(z) \in \mathcal{H}_2(S^1)^{n \times n}$, and let $R(z)=U(z) D(z) U(z)^P$ its unitary eigendecomposition as in Theorem \ref{theo:EVD_in_not_puiseux}. Furthermore, let $N \in \f N$ be such that $U(z),D(z) \in \mathcal{H}_N(S^1)^{n \times n}$ and, for all $i=1,\dots,n$ let $F_i(\theta):=[D(w(\theta))]_{i,i}$ where $w=z^{1/N}$ and $w(\theta)=\exp(\textnormal i \theta/N)$. Then, there is at least a value of $i$ for which $F_i(\theta)$ is not the zero function, it has a zero $\theta_0 \in ]-\pi,\pi[$ with $\lambda=\exp(i \theta_0)$, and it admits the corresponding Taylor expansion
\[ F_i(\theta)=\epsilon_i c_i (\theta-\theta_0)^{m_i} + O(\theta-\theta_0)^{m_i+1},\]
where $\epsilon_i \in \{-1,+1\}$, $0<m_i \in \f N$ and $c_i > 0$. Moreover, assume without loss of generality that the $F_i(\theta)$ are ordered in such a way that (1) if $j>i$ and $F_i(\theta) \equiv0$, then $F_j(\theta)\equiv 0$ (2) if $j>i$ and $F_i(\theta) \not \equiv 0$ has a zero of order $m_i$ at $\theta=\theta_0$, then either $F_j(\theta)\equiv 0$ or $F_j(\theta) \not \equiv 0$ has a zero of order $m_j \geq m_i$ at $\theta=\theta_0$. In this setting, we say that:
\begin{enumerate}
    \item if $F_i(\theta) \not \equiv 0$, then the corresponding $m_i$ is the $i$-th partial multiplicity of the eigenvalue $\lambda$;
    \item if $m_i >0$ then $\epsilon_i$ is the $i$-th sign characteristic of the eigenvalue $\lambda$;
    \item if $m_i >0$ then 
    \[ \phi_i = \epsilon_i \frac{1-(-1)^{m_i}}{2}\]
    is the $i$-th sign feature of the eigenvalue $\lambda$.
\end{enumerate}
In the special case where $P(z)$ is regular and $\lambda$ is simple, let $H(\theta)=R(\exp(\textnormal i \theta))$ and suppose that $\lambda$ is associated with the right eigenvector $v$. Then, the sign characteristic of $\lambda$ is 
\begin{equation}\label{eq:sign2}
   \mathrm{sign} \left[ v^* \frac{d H(\theta)}{d \theta} v\right]_{\theta=\theta_0} = \mathrm{sign} \left(\textnormal i \frac{z_0}{z_0^{g/2}} \left[ v^* \frac{d P(z)}{d z}  v\right]_{z=e^{i \theta_0}} \right).
\end{equation}

\end{definition}

\begin{example}
Consider the $*$-palindromic pencil \[ P(z)=\begin{bmatrix}
2z+2 & z+1-\textnormal i\\
\textnormal iz+z+1 & \textnormal i-\textnormal iz
\end{bmatrix} \Rightarrow P'(z)=\begin{bmatrix}
2 & 1\\
\textnormal i+1 & -\textnormal i
\end{bmatrix}. \]
It can be shown that its two finite eigenvalues are both unimodular and simple. A numerical computation yields the two eigenpairs $\lambda_1 \simeq -0.9582+0.1716\textnormal i$, $v_1^*\simeq \begin{bmatrix}
0.9331-0.0669 \textnormal i & 0.3888+0.0127 \textnormal i
\end{bmatrix}$ and $\lambda_2 \simeq 0.6852+0.7284 \textnormal i$, $v_2^*=\begin{bmatrix}
0.3875+0.2298 \textnormal i & -0.9685+0.0315 \textnormal i
\end{bmatrix}$. We can compute
\[ \textnormal i \lambda_1^{1/2} v_1^* P'(z) v_1 \simeq -2.4454, \qquad \textnormal  i \lambda_2^{1/2} v_2^* P'(z) v_2 \simeq 1.4238;\]
hence, the eigenvalue $\lambda_1$ has sign characteristic (and sign feature) $-1$ while $\lambda_2$ has sign characteristic (and sign feature) $+1$.
\end{example}

\begin{remark}
When $g$ is odd, changing the choice of a branch of the function $z^{g/2}$ in the definition of $R(z)$ within Definition \ref{def:sign}, or in \eqref{eq:sign2} for the case of simple eigenvalues of regular $*$-palindromic polynomials, induces in turn a change of the sign characteristics of the eigenvalue of interest. However, this is a \emph{global} change in the sense that, as long as the same choice of a branch is coherently made for all unimodular eigenvalues, then all the sign characteristics are simultaneously flipped. In practical applications of the sign characteristics, such as signature constraint theorems or perturbation theory results \cite{mntx}, all that matters is whether two (or more) eigenvalues have the same sign characteristic (feature) or different sign characteristics (feature); see \cite{glrind,mntx} for more details. Thus, a global change is unimportant and does not hinder the coherence of the theory. Nevertheless, and for the same reasons outlined above, it is important to make the same choice for all eigenvalues.
\end{remark}

In the next two examples, we illustrate the relation between sign characteristics and structured perturbation theory in the simple case of two nearby simple eigenvalues. In \cite[Section 5]{mntx}, it was shown that if a Hermitian matrix-valued function has two nearby simple real eigenvalues with opposite sign characteristics, then there exist some small Hermitian perturbation that can remove the eigenvalues from the real line; in other words, while of course every slightly perturbed Hermitian function will still have two eigenvalues in a \emph{complex} neighbourhood of the original ones, for some structured perturbations they may have become nonreal. On the contrary, if two nearby simple real eigenvalues have the same sign characteristics, then every slightly perturbed Hermitian matrix-valued function will still have two real eigenvalues in a \emph{real} neighbourhood of the original ones. Definition \ref{def:sign} makes it clear that an analogous scenario happens when considering structured perturbations of $*$-palindromic matrix polynomials. Example \ref{ex:uno} and Example \ref{ex:due} exhibit, respectively, a $*$-palindromic matrix polynomial whose two nearby unimodular eigenvalues have opposite sign characteristics (and can thus be removed from the unit circle by a small structured perturbation) and a $*$-palindromic matrix polynomial whose two nearby unimodular eigenvalues have identical sign characteristics (and thus \emph{cannot} be removed from the unit circle by a small structured perturbation).

\begin{example}\label{ex:uno}
Let $\epsilon > 0$ be a small real positive parameter and define
\[ A_\epsilon=\begin{bmatrix}
1 & \textnormal i\\
\textnormal i & \epsilon^2
\end{bmatrix}, \qquad P_\epsilon(z)=A_\epsilon z + A_\epsilon^* = \begin{bmatrix}
z+1 & \textnormal i (z-1)\\
\textnormal i(z-1) & \epsilon^2 (z+1)
\end{bmatrix}.\]
The matrix polynomial $P_\epsilon(z)$ is $*$-palindromic, and it has two simple unimodular eigenvalues $\lambda_1=\frac{(1+\textnormal i \epsilon)^2}{1+ \epsilon^2}$ and $\lambda_2=\ol{\lambda_1}$. Moreover, associated eigenvectors are $v_1=\begin{bmatrix}
\epsilon & 1
\end{bmatrix}^T$ and $v_2=\begin{bmatrix}
-\epsilon & 1
\end{bmatrix}^T$.
We have (choosing the principle branch of the square root)
\[ \textnormal i \lambda_1^{1/2} v_1^* A_\epsilon v_1 = -2 \epsilon \sqrt{1+\epsilon^2}, \quad \textnormal  i \lambda_2^{1/2} v_2^* A_\epsilon v_2 = 2 \epsilon \sqrt{1+\epsilon^2}. \]
Hence, the two nearby eigenvalues have opposite sign characteristics and thus the theory of sign characteristics allows us to predict that there is a small structured perturbation of $P_\epsilon(z)$ that has no unimodular eigenvalues. To see this, consider the pertubation
\[ \Delta A_\epsilon=\begin{bmatrix}
0&0\\
0&-2 \epsilon^2
\end{bmatrix}, \qquad P_\epsilon(z) + \Delta P_\epsilon(z)=(A_\epsilon + \Delta A_\epsilon) z + (A_\epsilon + \Delta A_\epsilon)^* = \begin{bmatrix}
z+1 &\textnormal i(z-1)\\
\textnormal i(z-1) & -\epsilon^2 (z+1)
\end{bmatrix}.\]
This is a small structured perturbation. Indeed, the coefficients $A_\epsilon$ and $A_\epsilon^*$ in $P_\epsilon(z)$ both have norm $\|A_\epsilon\|_2=\|A_\epsilon^*\|_2=\frac{\sqrt{5}+1}{2}+O(\epsilon^2)=O(1)$; and we have perturbed them by perturbations $\Delta A_\epsilon$ and $\Delta A_\epsilon^*$ of norm $\|\Delta A_\epsilon\|_2 = \| \Delta A_\epsilon^*\|_2 = 2 \epsilon^2$, while still preserving the $*$-palindromic structure of $P_\epsilon(z)$. On the other hand, the finite eigenvalues of $P_\epsilon(z)+\Delta P_\epsilon(z)$ are $\mu_1=\frac{1+\epsilon}{1-\epsilon}$ and $\mu_2=\mu_1^{-1}$, neither of which lies on $S^1$.
\end{example}

\begin{example}\label{ex:due}
Let $\epsilon>0$ be a small real positive parameter, and define
\[ B_\epsilon = \begin{bmatrix}
\textnormal i & \epsilon\\
\epsilon & \textnormal i
\end{bmatrix}, \qquad Q_\epsilon(z)=B_\epsilon z + B^*_\epsilon = \begin{bmatrix}
\textnormal i(z-1) & \epsilon(z+1)\\
\epsilon(z+1) & \textnormal i(z-1)
\end{bmatrix}. \]
The matrix polynomial $Q_\epsilon(z)$ is $*$-palindromic and it has two simple unimodular eigenvalues $\lambda_1$ and $\lambda_2$ (defined in the same way as in Example \ref{ex:uno}). Associated eigenvectors are instead $w_1=\begin{bmatrix}
1 &1
\end{bmatrix}^T$ and $w_2=\begin{bmatrix}
-1 &1
\end{bmatrix}^T$. In this case, still choosing the principal branch of the square root, it holds
\[\textnormal  i \lambda_1^{1/2} w_1^* B_\epsilon w_1 = -2 =  \textnormal i \lambda_2^{1/2} w_2^* B_\epsilon w_2. \]
Thus, the two nearby eigenvalues have the same sign characteristic. Hence, for every sufficiently small perturbation $\Delta B_\epsilon \in \C^{2 \times 2}$, we predict using the theory of sign characteristics that $Q_\epsilon(z)+\Delta Q_\epsilon(z):=(B_\epsilon+\Delta B_\epsilon)z+(B_\epsilon+\Delta B_\epsilon)^*$ will still have two eigenvalues on $S^1$. To verify this prediction, note that\footnote{It is not important to exclude $-1$ in this step, because the eigenvalues of $Q_\epsilon(z)$ are approximately (up to an $O(\epsilon)$ distance) equal to $1$, and hence small enough perturbations of it will also have eigenvalues close to $1$.} $z \in S^1 \setminus \{-1\} \Leftrightarrow w=\frac{1-z}{\textnormal i(1+z)} \in \mathbb{R}$. Thus, $Q_\epsilon(z)+\Delta Q_\epsilon(z)$ has two finite eigenvalues on $S^1$ if and only if $T(w,\Delta B_\epsilon):=[\textnormal i(B_\epsilon^*-B_\epsilon)+X]w+ (B_\epsilon^* + B_\epsilon + Y)$ has two real finite eigenvalues, where $X:=\textnormal i(\Delta B_\epsilon^*-\Delta B_\epsilon)$ and $Y:=\Delta B_\epsilon^*+\Delta B_\epsilon$ are both Hermitian matrices of sufficiently small norm. Suppose now $\|\Delta B_\epsilon\|_2 < 1$, implying $\|X\|_2 < 2$. Observe that $\textnormal i(B_\epsilon^* - B_\epsilon)=2I_2$ is positive definite, and thus so is the leading coefficient of $T(w,\Delta B_\epsilon)$ (under the assumption $\| \Delta B_\epsilon\|_2<1$); and it is a classical result \cite{hmt,Parlett} that the finite eigenvalues of a Hermitian pencil $w H_1 + H_0$ whose leading coefficient $H_1$ is positive definite are real (and coincide with the eigenvalues of the Hermitian matrix $-H_1^{-1/2} H_0 H_1^{-1/2}$).
\end{example}

We conclude this section by noting that the above described approach leaves it open how to give an analytic definition of the sign characteristic at the unimodular eigenvalue $-1$ for $*$-palindromic matrix polynomials. We note that one can follow (at least) two possible strategies:
\begin{itemize}
\item Either one could employ an ad hoc approach, just as it was done in \cite{mntx} at $\infty$ for the Hermitian case (with the disadvantage of treating the point $-1$ specially);
\item Or,  since there must exist a $\tau \in \  ]-\pi,\pi]$ such that $\exp(\textnormal i \tau)$ is not an eigenvalue, one could make the non-standard choice of placing the branch line of the logarithm on the semiline $\arg(\theta)=\tau$ (with the disadvantage that such an approach requires a polynomial-dependent definition).
\end{itemize} 
A full treatment of the sign characteristic of $*$-palindromic matrix polynomials, including algebraic formulae to compute the sign characteristics for nonsimple unimodular eigenvalues, or for unimodular eigenvalues of nonregular $*$-palindromic matrix polynomials, is nevertheless beyond the scope of the present article and left as future research.

\section{Conclusions}\label{sec:conclusions}
In this paper we have revisited the existence of unitary eigendecompositions of a para-Hermitian matrix $A(z)$. We have filled some gaps in the existing literature on this subject, delivering the fist (to our knowledge) fully complete proof of this result. Moreover, we have relaxed the assumptions on the para-Hermitian matrix $A(z)$ allowing it to be an analytic (on $S^1$) function of $w=z^{\frac1M}$ for some positive integer $M$, we have clarified that the periods of eigenvalues and eigenvectors may be taken equal in such unitary eigendecompositions, we have explained that similar generalizations of Rellich's theorem can be given for matrices that are analytic and Hermitian on an \emph{arbitrary} circle or line in the complex plane, and we have also given a complete proof of the existence of a holomorphic pseudo-circulant decomposition of a holomorphic para-Hermitian matrix. Furthermore, we have applied our results to the singular value decomposition of matrices that are analytic functions of $w$ on the unit circle, and to the sign characteristics of $*$-palindromic matrix polynomials.
\section*{Acknowledgements}

We thank Jen Pestana, Ian Proudler and Stephan Weiss for introducing us to the importance of this problem in signal processing, for sharing a preliminary version of \cite{Weissnew}, and for useful discussions.

\end{document}